\theoremstyle{plain}  
\newtheorem{thm}{Theorem}[section] 
\newtheorem{lem}[thm]{Lemma} 
\newtheorem{prop}[thm]{Proposition} 
\newtheorem{cor}[thm]{Corollary} 
\theoremstyle{definition}
\theoremstyle{remark} 
\newtheorem*{rem}{Remark}
\begin{document}

\title{Convolution roots and differentiability of isotropic positive definite functions on spheres}
\author{Johanna Ziegel\thanks{Heidelberg University, Institute of Applied Mathematics, Im Neuenheimer Feld 294, 69120 Heidelberg, Germany, tel: +49 (0) 6221 54 5716, e-mail: j.ziegel@uni-heidelberg.de}}
\date{}
\maketitle

\begin{abstract}
We prove that any isotropic positive definite function on the sphere can be written as the spherical self-convolution of an isotropic real-valued function. It is known that isotropic positive definite functions on $d$-dimensional Euclidean space admit a continuous derivative of order $[(d-1)/2]$. We show that the same holds true for isotropic positive definite functions on spheres and prove that this result is optimal for all odd dimensions. 
\end{abstract}

\section{Introduction}\label{sec:intro}

For an integer $d \in \mathbb{N}$ we denote the $d$-dimensional unit sphere by $\mathbb{S}^d = \{x \in \mathbb{R}^{d+1}\;|\; \lVert x\rVert = 1\}$, where $\lVert \cdot \rVert$ denotes the Euclidean norm on $\mathbb{R}^{d+1}$. A function $f:\mathbb{S}^d \times \mathbb{S}^d \to \mathbb{R}$ is \emph{positive definite} if
\begin{equation}\label{eq:1}
\sum_{i=1}^n \sum_{j=1}^n c_ic_j f(u_i,u_j) \ge 0
\end{equation}
for all sets of points $u_1,\dots,u_n \in \mathbb{S}^d$ and coefficients $c_1,\dots,c_n \in \mathbb{R}$. The function $f$ is \emph{isotropic} if there exists a function $\bar{f}:[0,\pi] \to \mathbb{R}$ that fulfils
\begin{equation}\label{eq:isodef}
f(u,v) = \bar{f}(\theta(u,v)) \quad \text{for all $u,v \in \mathbb{S}^d$,}
\end{equation}
where the geodesic distance on $\mathbb{S}^d$ is given by $\theta:\mathbb{S}^d \times \mathbb{S}^d \to \mathbb{R}$, $\theta(u,v) = \arccos(\langle u,v\rangle)$. Here, $\langle \cdot,\cdot \rangle$ denotes the standard scalar product on $\mathbb{R}^{d+1}$.

Isotropic positive definite functions on spheres occur in statistics as correlation functions of homogeneous random fields on spheres or of star-shaped random particles. They also have applications in approximation theory where they are used as radial basis functions for interpolating scattered data on spherical domains. Recent applications in spatial statistics can be found in \citet{Banerjee2005,HuangZhangETAL2011,HansenThorarinsdottirETAL2011}; application examples in approximation theory are given in the works of \citet{XuCheney1992,FasshauerSchumaker1998,CavorettoDeRossi2010}.

The class $\Psi_d$ consists of all continuous functions $\psi:[0,\pi] \to \mathbb{R}$ with $\psi(0)=1$, such that the isotropic function $\psi(\theta(\cdot,\cdot))$ is positive definite. The classes $\Psi_d$ are nonincreasing in $d$,
\[
\Psi_1 \supset \Psi_2 \supset \dots \supset \Psi_{\infty} = \bigcap_{d=1}^{\infty}\Psi_d
\]
with the inclusions being strict. 

We define the \textit{spherical convolution} of two functions $f,g:\mathbb{S}^d\times \mathbb{S}^d \to \mathbb{R}$ as
\begin{equation*}\label{eq:convdef}
(f \circledast g) (u,v) = \int_{\mathbb{S}^d} f(u,w)g(w,v) dw,\quad \text{for all $u,v \in \mathbb{S}^d$,}
\end{equation*}
where the integration is with respect to the $d$-dimensional Hausdorff measure on $\mathbb{S}^d$. The total measure of $\mathbb{S}^d$ is denoted by $\sigma_d = 2\pi^{(d+1)/2}/\Gamma((d+1)/2)$. 
It is easy to see that the spherical self-convolution of any symmetric $L^2$-function $f$ on $\mathbb{S}^d\times \mathbb{S}^d$ is positive definite. 

Spherical convolution has been used by \citet{Wood1995,Schreiner1997}; \linebreak \citet{EstradeIstas2010,HansenThorarinsdottirETAL2011} as a tool to construct spherical positive definite functions. It is natural to ask the reverse question: Which functions can be obtained through this construction principle? We can give the following general positive answer, which we prove in Section \ref{sec:conv}.
\begin{thm}\label{thm:convolution}
Any $\psi \in \Psi_d$ has a spherical convolution root, which can be taken to be real-valued and isotropic.
\end{thm}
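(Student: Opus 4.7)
The plan is to work in the spherical harmonic / Gegenbauer basis, in which spherical convolution becomes diagonal, reducing the problem to extracting scalar square roots of nonnegative numbers. The starting point is Schoenberg's classical representation
\[
\psi(\theta) = \sum_{n=0}^\infty b_{n,d}\,\frac{C_n^{(d-1)/2}(\cos\theta)}{C_n^{(d-1)/2}(1)},\qquad b_{n,d}\ge 0,\ \sum_n b_{n,d} = \psi(0) = 1,
\]
which is available for every $\psi\in\Psi_d$. Together with the addition formula
\[
\frac{C_n^{(d-1)/2}(\langle u,v\rangle)}{C_n^{(d-1)/2}(1)} = \frac{\sigma_d}{N(n,d)}\sum_{k=1}^{N(n,d)} Y_{n,k}(u)\,Y_{n,k}(v),
\]
where the $Y_{n,k}$ form an orthonormal basis of degree-$n$ spherical harmonics with respect to Hausdorff measure and $N(n,d)$ is the dimension of that space, this gives a double representation of the kernel $f(u,v)=\psi(\theta(u,v))$.

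Next I would make the isotropic ansatz $g(u,v) = \bar g(\theta(u,v))$ with
\[
\bar g(\theta) = \sum_{n=0}^\infty c_n\,\frac{C_n^{(d-1)/2}(\cos\theta)}{C_n^{(d-1)/2}(1)},
\]
and compute $(g\circledast g)(u,v)$ by inserting the harmonic expansion and using orthonormality of the $Y_{n,k}$. The cross terms collapse, and a short calculation yields that $g\circledast g$ is again isotropic with Gegenbauer coefficients $\sigma_d\,c_n^2/N(n,d)$. Matching with the coefficients $b_{n,d}$ of $\psi$ forces
\[
c_n = \sqrt{\frac{N(n,d)\,b_{n,d}}{\sigma_d}},
\]
which is real precisely because $b_{n,d}\ge 0$; this is where Schoenberg's nonnegativity is used decisively.

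It remains to verify that this formal series actually defines an admissible isotropic $L^2$ kernel, so that the convolution is meaningful and the term-by-term computation is rigorous. The key check is finiteness of the $L^2$ norm of $g(u,\cdot)$. Using $\sum_k Y_{n,k}(u)^2 = N(n,d)/\sigma_d$ (the addition formula at $u=v$), Parseval gives $\|g(u,\cdot)\|_{L^2}^2 = \sum_n c_n^2\,\sigma_d/N(n,d) = \sum_n b_{n,d} = 1<\infty$. This also shows that the Gegenbauer series for $\bar g$ converges in the appropriate weighted $L^2$-sense on $[0,\pi]$, and that $g$ is symmetric and real-valued. Continuity of $g\circledast g$ (needed to match the continuous function $\psi$) is automatic, since the spherical convolution of two symmetric $L^2$ kernels is continuous, and isotropy of the convolution follows from the rotation invariance of Hausdorff measure applied to isotropic factors.

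The main obstacle, and essentially the only nontrivial point, is the coefficient-level bookkeeping: one must track the interplay between the normalizations of Gegenbauer polynomials, spherical harmonics and Hausdorff versus normalized surface measure in order to identify the right relation $c_n^2 \propto b_{n,d}N(n,d)$. Once the relation is in place, the nonnegativity $b_{n,d}\ge 0$ from Schoenberg's theorem and the summability $\sum b_{n,d}=1$ from $\psi(0)=1$ do all the remaining work.
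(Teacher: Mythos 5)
Your proposal is correct and follows essentially the same route as the paper: diagonalise the spherical convolution in the Gegenbauer basis (the paper does this via the reproducing-kernel identity for $C_n^{(d-1)/2}$, which is the integrated form of the addition formula you invoke), and then take scalar square roots of the nonnegative Schoenberg coefficients, with $\sum_n b_{d,n}=1$ guaranteeing that the resulting root lies in $L^2_{d,\mathcal{I}}$. The only cosmetic difference is that the paper packages the bookkeeping into an orthonormal basis $\{E_{d,n}\}$ of the isotropic $L^2$ space and a general necessary-and-sufficient criterion (Theorem \ref{thm:convrep}), whereas you carry the spherical harmonics $Y_{n,k}$ explicitly.
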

The techniques used to show the convolution representation theorem have lead to the solution of a further interesting problem concerning positive definite functions on spheres. 

A positive definite function $f$ on the Euclidean space $\mathbb{R}^d$ is defined analogously to \eqref{eq:1}. The function $f$ is called radial, if $f(x,y) = \tilde{f}(\lVert x-y \rVert)$ for some function $\tilde{f}:[0,\infty) \to \mathbb{R}$. \citet{Schoenberg1938} showed that radial positive definite functions on $\mathbb{R}^d$ have a continuous derivative of order $[(d-1)/2]$, where $[c]$ denotes the greatest integer less or equal to $c$. The following theorem, which will be shown in Section \ref{sec:diff1} confirms the conjecture of \citet{Gneiting2011} that the same holds true on spheres.

\begin{thm}\label{thm:diff}
The functions in the class $\Psi_d$ admit a continuous derivative of order $[(d-1)/2]$ on the open interval $(0,\pi)$.
\end{thm}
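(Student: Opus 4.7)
My approach is to rely on the Schoenberg series expansion for $\psi \in \Psi_d$ together with classical Szeg\H{o} asymptotics for Gegenbauer polynomials, and to differentiate the series term by term. For $d = 1$ the conclusion reduces to continuity, which is already built into the definition of $\Psi_d$, so I focus on $d \geq 2$. By Schoenberg's characterisation, every $\psi \in \Psi_d$ admits an absolutely convergent representation
\[
\psi(\theta) = \sum_{n=0}^\infty b_n\,\phi_n(\theta), \qquad \phi_n(\theta) := \frac{C_n^{\lambda}(\cos\theta)}{C_n^{\lambda}(1)},
\]
with $\lambda = (d-1)/2$, $b_n \ge 0$, and $\sum_n b_n = \psi(0) = 1$. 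Writing $k := [(d-1)/2]$, the plan is to show that the formally $k$-times differentiated series $\sum_n b_n\,\phi_n^{(k)}(\theta)$ converges absolutely and uniformly on every compact subset of $(0,\pi)$. Combined with the analogous (and strictly easier) estimates for $j < k$, the standard theorem on term-by-term differentiation of uniformly convergent series then gives $\psi \in C^k(0,\pi)$.

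The required analytic input is an asymptotic bound on $\phi_n^{(k)}$. Iterating the identity $(d/dx)\,C_n^\lambda(x) = 2\lambda\,C_{n-1}^{\lambda+1}(x)$ together with the chain rule, one expresses $\phi_n^{(k)}(\theta)$ as a finite linear combination, with coefficients polynomial in $\sin\theta$ and $\cos\theta$, of ratios $C_{n-j}^{\lambda+j}(\cos\theta)/C_n^\lambda(1)$ for $j = 0,\dots,k$. Szeg\H{o}'s asymptotic formula supplies $C_m^\mu(\cos\theta) = O(m^{\mu-1})$ uniformly on each compact $K \subset (0,\pi)$, while $C_n^\lambda(1) \sim n^{2\lambda-1}/\Gamma(2\lambda)$. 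Combining these yields
\[
\sup_{\theta \in K}\bigl|\phi_n^{(k)}(\theta)\bigr| = O\bigl(n^{k-\lambda}\bigr).
\]

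The crucial observation is that with $k = [\lambda]$ the exponent $k - \lambda$ equals $0$ when $d$ is odd and $-1/2$ when $d$ is even, so $n^{k-\lambda}$ is bounded in $n$. Hence the differentiated series is dominated on $K$ by a constant multiple of $\sum_n b_n$; the Weierstrass M-test delivers absolute and uniform convergence there, establishing continuity of $\psi^{(k)}$ on $(0,\pi)$.

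The main obstacle is the uniformity of the Gegenbauer asymptotics on compacta of $(0,\pi)$: the bound $C_m^\mu(\cos\theta) = O(m^{\mu-1})$ breaks down as $\theta \to 0$ or $\theta \to \pi$, which is exactly why the theorem is stated on the open interval. Theorem \ref{thm:convolution} is not strictly needed in the argument above, but it supplies a conceptual backdrop: through the Funk--Hecke theorem the Schoenberg coefficient $b_n$ coincides, up to a positive constant, with the squared $n$th Gegenbauer coefficient of the isotropic real-valued convolution root, which both re-derives $b_n \ge 0$ and provides an $L^2$-based interpretation of the summability condition $\sum_n b_n < \infty$ that drives the estimates.
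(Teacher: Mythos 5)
Your argument is correct, but it takes a genuinely different route from the paper. You differentiate Schoenberg's series term by term, controlling the differentiated terms via Szeg\H{o}'s estimate $C_m^{\mu}(\cos\theta)=O(m^{\mu-1})$ uniformly on compact subsets of $(0,\pi)$ together with $C_n^{\lambda}(1)\sim n^{2\lambda-1}/\Gamma(2\lambda)$; the resulting bound $\sup_{\theta\in K}|\phi_n^{(k)}(\theta)|=O(n^{k-\lambda})=O(1)$ for $k=[\lambda]$ combines with $\sum_n b_{d,n}<\infty$ and the Weierstrass M-test to license term-by-term differentiation. The paper instead performs a single algebraic differentiation step (Proposition \ref{prop:der}) applied iteratively: using \eqref{eq:Cnder}, the recurrence \citetalias[18.9.8]{dlmf}, and Gneiting's dimension-walk relations between $b_{d,n}$ and $b_{d+2,n}$, it shows $\psi'(\vartheta)=(f_1(\vartheta)-f_2(\vartheta))/\sin\vartheta$ with $f_1,f_2\in\tilde{\Psi}_d$, lowering the dimension by two at each differentiation. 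The paper's route avoids all asymptotics of Gegenbauer polynomials but needs the uniform-convergence result Proposition \ref{prop:unifconv} (hence the convolution-root machinery) and the Tauberian Lemma \ref{lem:Ana1} to dispose of boundary terms; in exchange it delivers structural information --- that $\sin\vartheta\,\psi'(\vartheta)$ is a difference of members of $\tilde{\Psi}_d$ --- which is exactly what feeds the turning-bands operator and the optimality discussion in Section \ref{sec:diff2}. Your route is shorter and self-contained modulo the (standard but nontrivial) uniform Szeg\H{o} estimate, which you should cite precisely; it yields only the differentiability statement itself. Two points worth making explicit: the undifferentiated series converges uniformly on all of $[0,\pi]$ because $|C_n^{\lambda}(x)|\le C_n^{\lambda}(1)$, and the coefficients appearing in the iterated chain-rule expansion of $\phi_n^{(k)}$ are polynomials in $\sin\theta$ and $\cos\theta$, hence bounded on $K$, so the claimed domination is legitimate.
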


The derivatives at the point $\vartheta =0$ can be infinite or can take finite values. We believe that the same holds true at $\vartheta = \pi$. However, we are currently not able to provide simple examples for the latter claim. The \emph{powered exponential family}
\[
\psi(\vartheta) = \exp\Big(-\Big(\frac{\vartheta}{c}\Big)^{\alpha}\Big), \quad \vartheta \in [0,\pi]
\]
with parameters $c > 0$ and $\alpha \in (0,1]$ belongs to $\Psi_{\infty}$; see \citet{Gneiting2011}. For $\alpha < 1$ the first derivative at zero is $-\infty$, whereas for $\alpha=1$ it takes the value $-1/c$. The \emph{sine power function}
\[
\psi(\vartheta) = 1 - \Big(\sin\frac{\vartheta}{2}\Big)^{\alpha}, \quad \vartheta \in [0,\pi]
\]
of \citet{SoubeyrandEnjalbertETAL2008} is a member of $\Psi_{\infty}$ for $\alpha \in [0,2]$. For $\alpha \in (0,1)$, the first derivative at zero is $-\infty$; for $\alpha = 1$, we obtain $\psi'(0)=-1/2$. If $\alpha \in (1,2]$, the derivative at zero is zero. 

In the Euclidean case it is known that Theorem \ref{thm:diff} is the best possible; see \citet{Gneiting1999}. Hence, there are radial positive definite functions on $\mathbb{R}^d$ whose derivative of order $[(d-1)/2]+1$ is not continuous. The optimality of Theorem \ref{thm:diff} for $d=1,3,5,7$ follows from the results of \citet{BeatsonzuCastellETAL2011}. 
In section \ref{sec:diff2} we introduce a turning bands operator for isotropic positive definite functions on spheres to show the optimality of Theorem \ref{thm:diff} for all odd dimensions. In even dimensions it remains an open problem. However, once the optimality can be shown for $d=2$, the turning bands operator immediately also yields the assertion in all even dimensions as well. 

The convolution representation result, Theorem \ref{thm:convolution}, also has consequences that are of interest in statistical applications. Firstly, it shows, that any isotropic covariance function on the sphere can be obtained by the L\'evy based approach to modelling star-shaped random particles introduced by \citet{HansenThorarinsdottirETAL2011}. Secondly, the proof of Theorem \ref{thm:convolution}  reveals a way to resolve the identifyability issues associated with these models. It is possible to distinguish one specific convolution root amongst all possible convolution roots of a given covariance function. This is the basis of the inference procedure described in \cite{Ziegel2011}.

\section{Convolution of isotropic functions on spheres}\label{sec:prelim}
Let $L^2(\mathbb{S}^d\times \mathbb{S}^d)$ be the space of square-integrable functions on $\mathbb{S}^d\times \mathbb{S}^d$ with the Hausdorff measure. By $\langle\cdot,\cdot\rangle_{L^2}$ and $\lVert \cdot \rVert_{L^2}$ we denote the scalar product and the norm of the Hilbert space $L^2(\mathbb{S}^d\times \mathbb{S}^d)$, respectively. We consider the closed subspace $L_{d,\mathcal{I}}^2 \subset L^2(\mathbb{S}^d\times \mathbb{S}^d)$ of functions that are isotropic as defined at \eqref{eq:isodef}. 
For $f \in L_{d,\mathcal{I}}^2$ it holds for all $d+1$-dimensional orthogonal matrices $R$ that
\[
f(Ru,Rv) = \bar{f}(\theta (Ru,Rv)) = \bar{f}(\theta(u,v)) = f(u,v),\quad \text{for all $u,v \in \mathbb{S}^d$.}
\]
This property characterises the functions in $L_{d,\mathcal{I}}^2$.
\begin{prop}\label{prop:convbound}
The convolution $f\circledast g$ of $f,g \in L_{d,\mathcal{I}}^2$ is in $L_{d,\mathcal{I}}^2$ and
\begin{equation}\label{eq:convbound}
\lVert f\circledast g\rVert_{L^2} \le \sigma_d \sup_{u,v \in \mathbb{S}^d} |(f \circledast g) (u,v)| \le \lVert f \rVert_{L^2} \lVert g \rVert_{L^2}.
\end{equation}
The convolution is bilinear, commutative and
\begin{equation}\label{eq:convnorm}
\lVert f \circledast g \rVert_{L^2}^2 = \langle f\circledast f, g\circledast g\rangle_{L^2}.
\end{equation}
\end{prop}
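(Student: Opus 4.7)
The plan is to verify the four claims in sequence; the only non-obvious ingredient is that isotropy lets one strengthen Young-type $L^2$ estimates into pointwise bounds.

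For the inequalities in \eqref{eq:convbound} I would start with the rightmost. Applying Cauchy--Schwarz in the $w$-integration gives
\[
|(f\circledast g)(u,v)|^2 \le \int_{\mathbb{S}^d} f(u,w)^2\, dw \cdot \int_{\mathbb{S}^d} g(w,v)^2\, dw.
\]
The key observation is that, by isotropy, the map $u \mapsto \int_{\mathbb{S}^d} f(u,w)^2\,dw$ is orthogonally invariant and hence constant on $\mathbb{S}^d$; averaging over $u$ identifies the constant as $\lVert f\rVert_{L^2}^2/\sigma_d$, and the same holds for $g$. This yields $\sigma_d \sup|f\circledast g| \le \lVert f\rVert_{L^2}\lVert g\rVert_{L^2}$, and in particular shows that $f\circledast g$ is bounded. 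The first inequality is then just the trivial estimate $\lVert h \rVert_{L^2(\mathbb{S}^d\times\mathbb{S}^d)} \le \sigma_d \lVert h \rVert_\infty$ applied to $h = f \circledast g$, which simultaneously shows that $f\circledast g \in L^2(\mathbb{S}^d\times\mathbb{S}^d)$.

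Isotropy of $f\circledast g$ follows from the substitution $w \mapsto Rw$ for $R \in O(d+1)$, using the rotational invariance of the Hausdorff measure together with the assumed isotropy of $f$ and $g$. Bilinearity is immediate from the definition of the convolution. For commutativity, isotropy forces the symmetries $f(u,w) = f(w,u)$ and $g(u,w) = g(w,u)$, since $\theta$ is symmetric; combined with $(f\circledast g)(u,v) = (f\circledast g)(v,u)$, which itself follows from isotropy of $f\circledast g$ just established, one obtains $(f\circledast g)(u,v) = \int_{\mathbb{S}^d} f(v,w)\,g(w,u)\,dw = \int_{\mathbb{S}^d} g(u,w)\,f(w,v)\,dw = (g\circledast f)(u,v)$.

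Finally, for \eqref{eq:convnorm} I would expand $\lVert f\circledast g\rVert_{L^2}^2$ as a fourfold integral over $u,v,w,w'$, apply Fubini (justified by the uniform bound established above), and perform the $u$- and $v$-integrations before the $w$-, $w'$-integrations. The symmetry $f(u,w)=f(w,u)$ converts $\int_{\mathbb{S}^d} f(u,w)f(u,w')\,du$ into $(f\circledast f)(w,w')$, and analogously for $g$, producing the claimed inner product. I do not expect a serious obstacle; the whole proposition is essentially a structured exercise in Fubini together with the constancy of slice $L^2$-norms forced by isotropy, the latter being the only step where rotational invariance is really used.
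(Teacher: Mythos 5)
Your proposal is correct and follows essentially the same route as the paper: Cauchy--Schwarz (H\"older) in the $w$-variable, the observation that isotropy makes the slice integrals $\int_{\mathbb{S}^d} f(u,w)^2\,dw$ constant in $u$ so that averaging identifies them as $\lVert f\rVert_{L^2}^2/\sigma_d$, the trivial bound $\lVert h\rVert_{L^2}\le\sigma_d\lVert h\rVert_\infty$ for the first inequality, and Fubini for \eqref{eq:convnorm}. You merely spell out the commutativity and Fubini steps that the paper dismisses as clear, and these details check out.
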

\begin{proof}
It is easy to check that $f \circledast g$ is isotropic. 
Furthermore, by H\"older's inequality,
\begin{align*}
|(f \circledast g) (u,v)| &\le \int_{\mathbb{S}^d} |\bar{f}(\theta(u,w))\bar{g}(\theta(w,v))| dw\\
&\le \left\{\int_{\mathbb{S}^d}\bar{f}(\theta(u,w))^2dw\right\}^{1/2}\left\{\int_{\mathbb{S}^d}\bar{g}(\theta(w,v))^2dw\right\}^{1/2}\\
& \stackrel{(\ast)}{=} \left\{\frac{1}{\sigma_d}\int_{\mathbb{S}^d}\int_{\mathbb{S}^d}\bar{f}(\theta(u,w))^2 dw du\right\}^{1/2}\left\{\frac{1}{\sigma_d}\int_{\mathbb{S}^d}\int_{\mathbb{S}^d}\bar{g}(\theta(w,v))^2dw dv\right\}^{1/2}\\
& = \frac{1}{\sigma_d}\lVert f \rVert_{L^2} \lVert g \rVert_{L^2}
\end{align*}
for $u,v \in \mathbb{S}^d$. The equality at $(\ast)$ holds true, because the integrals on the left hand side do not depend on $u$, $v$, respectively. Therefore, we obtain \eqref{eq:convbound}, and, in particular, $f \circledast g \in L^2(\mathbb{S}^d\times \mathbb{S}^d)$. Bilinearity and commutativity are clear, and equation \eqref{eq:convnorm} is an application of Fubini's theorem.
\end{proof}

\citet{Schoenberg1942} characterised the functions of the classes $\Psi_d$ using Gegenbauer (or ultraspherical) polynomials. 
Let $\lambda > 0$. The Gegenbauer polynomials $C_n^{\lambda}$ for $n \in \mathbb{N}_0$ are defined by the expansion
\[
\frac{1}{(1+r^2 - 2r\cos\vartheta)^{\lambda}} = \sum_{n=0}^{\infty} r^n C_n^{\lambda}(\cos\vartheta), \quad \text{for $\vartheta \in [0,\pi]$;}
\]
see \citetalias[18.12.4]{dlmf}. We will repeatedly use that
\begin{equation}\label{eq:Cnone}
C_n^{\lambda}(1) = \frac{\Gamma(n+2\lambda)}{n! \Gamma(2\lambda)}.
\end{equation}
If $\lambda = 0$ we set $C_n^0(\cos\vartheta) = \cos(n\vartheta)$ for $\vartheta \in [0,\pi]$
as in \citet{Schoenberg1942}. We need the following important property of the Gegenbauer polynomials with $\lambda = (d-1)/2$, which follows from \citet[Theorem 3.7]{Xu2005}. For $d \ge 2$, $k, n \in \mathbb{N}_0$ and $u,v \in \mathbb{S}^d$, we have
\begin{equation}\label{eq:repkernel}
\int_{\mathbb{S}^d} C_k^{(d-1)/2}(\langle u,w\rangle) C_n^{(d-1)/2}(\langle w,v \rangle) dw = \delta_{k,n} \sigma_d \frac{d-1}{2n + d-1} C^{(d-1)/2}_n(\langle u,v\rangle), 
\end{equation}
where $\delta_{k,n}$ denotes the Kronecker delta. If $\lambda = 0$ it holds that
\begin{equation*}
\int_{\mathbb{S}^1} C_k^0(\langle u,w\rangle) C_n^0(\langle w,v \rangle) dw = \delta_{k,n} \pi C^0_n(\langle u,v\rangle)
\end{equation*}
for $n \in \mathbb{N}_0$, $k \in \mathbb{N}$, $u,v \in \mathbb{S}^d$, and 
$\int_{\mathbb{S}^1} C_0^0(\langle u,w\rangle) C_0^0(\langle w,v \rangle) dw = 2\pi$.

\begin{prop}\label{prop:basis}
Let $d \ge 2$. The family $\mathcal{C}_d = \{E_{d,n}\}_{n \in \mathbb{N}_0}$, where $E_{d,n}:=c_{d,n}\times$ $C_n^{(d-1)/2}(\langle \cdot,\cdot\rangle)\in L_{d,\mathcal{I}}^2$ with 
\[
c_{d,n} = \sigma_d^{-1}\sqrt{\frac{2n + d-1}{(d-1)C_{n}^{(d-1)/2}(1)}}
\]
is an orthonormal basis of $L_{d,\mathcal{I}}^2$. Furthermore, for $k, n \in \mathbb{N}_0$,
\[
E_{d,k} \circledast E_{d,n} = \delta_{k,n} \bar{c}_{d,n} E_{d,n},
\]
where
\[
\bar{c}_{d,n} = \sqrt{\frac{d-1}{(2n+d-1)C_{n}^{(d-1)/2}(1)}}.
\]
\end{prop}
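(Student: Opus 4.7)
The plan is to first establish the convolution identity by a direct application of the reproducing kernel formula \eqref{eq:repkernel}, then deduce orthonormality from it through a trace-like identity, and finally address completeness by reducing to a weighted $L^2$-space on $[-1,1]$.

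For the convolution formula, I would substitute the definitions of $E_{d,k}$ and $E_{d,n}$ into $(E_{d,k} \circledast E_{d,n})(u,v)$ and invoke \eqref{eq:repkernel}. This immediately yields
\[
(E_{d,k} \circledast E_{d,n})(u,v) = c_{d,k}\, c_{d,n}\, \delta_{k,n}\, \sigma_d\, \frac{d-1}{2n+d-1}\, C_n^{(d-1)/2}(\langle u,v\rangle),
\]
and matching this against the claimed $\delta_{k,n}\bar{c}_{d,n} E_{d,n}(u,v)$ forces $\bar{c}_{d,n} = c_{d,n}\sigma_d(d-1)/(2n+d-1)$. Plugging in the explicit formula for $c_{d,n}$ and simplifying reproduces the stated $\bar{c}_{d,n}$.

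Orthonormality follows from a trace-type identity. Since $\theta$ is symmetric, $(f\circledast g)(u,u) = \int f(u,w) g(u,w)\, dw$ for all isotropic $f,g$, and by Proposition~\ref{prop:convbound} the left-hand side is independent of $u$. Integrating over $u$ yields $\sigma_d (f\circledast g)(u,u) = \langle f, g\rangle_{L^2}$. Applying this to $f=E_{d,k}$, $g=E_{d,n}$ and combining with the convolution identity gives $\langle E_{d,k}, E_{d,n}\rangle_{L^2} = \sigma_d\delta_{k,n}\bar{c}_{d,n}c_{d,n} C_n^{(d-1)/2}(1)$; substituting the closed forms for $c_{d,n}$ and $\bar{c}_{d,n}$ collapses the right-hand side to $\delta_{k,n}$.

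Completeness requires identifying $L_{d,\mathcal{I}}^2$ with a one-dimensional weighted space. Parametrising $\bar{f}$ by $\theta$ and substituting $x=\cos\theta$ in $\int_{\mathbb{S}^d} h(\langle u,v\rangle)\, dv = \sigma_{d-1}\int_0^\pi h(\cos\theta)\sin^{d-1}\theta\, d\theta$, I obtain, up to a constant factor, an isometric isomorphism between $L_{d,\mathcal{I}}^2$ and $L^2([-1,1],(1-x^2)^{(d-2)/2}dx)$, under which $E_{d,n}$ corresponds to a normalised $C_n^{(d-1)/2}$. Completeness of $\mathcal{C}_d$ then reduces to the classical completeness of the Gegenbauer polynomials in this weighted $L^2$-space, which itself follows from the Weierstrass approximation theorem. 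The bulk of the work is algebraic bookkeeping to verify the normalising constants; the only genuinely non-elementary input is \eqref{eq:repkernel}, which has been imported.
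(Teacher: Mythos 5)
Your proposal is correct and follows essentially the same route as the paper: both derive the convolution identity and orthogonality directly from \eqref{eq:repkernel}, and both reduce completeness to the density of polynomials in the weighted space $L^2([-1,1],(1-x^2)^{(d-2)/2}dx)$. Your detour through the diagonal identity $\sigma_d(f\circledast g)(u,u)=\langle f,g\rangle_{L^2}$ for orthonormality is a harmless reorganisation of the paper's direct double integration of \eqref{eq:repkernel}, and your constants check out.
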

\begin{proof}
By \eqref{eq:repkernel}
\begin{align*}
\int_{\mathbb{S}^d}\int_{\mathbb{S}^d} C_k^{(d-1)/2}(\langle u,v\rangle) C_n^{(d-1)/2}(\langle u,v \rangle) du dv & = \delta_{k,n}\sigma_d \frac{d-1}{2n + d-1} \int_{\mathbb{S}^d} C^{(d-1)/2}_n(\langle v,v\rangle)  dv \\
&= \delta_{k,n} \sigma_d^2 \frac{d-1}{2n + d-1} C^{(d-1)/2}_n(1),
\end{align*}
hence $\mathcal{C}_d$ is an orthonormal system. It is also a Hilbert space basis, because polynomials are dense in $L^2([-1,1])$. The second assertion is a direct consequence of \eqref{eq:repkernel}.
\end{proof} 

The following Proposition complements Proposition \ref{prop:basis} and is not hard to prove.
\begin{prop}\label{prop:basisd1}
Proposition \ref{prop:basis} also holds for $d=1$ with 
\[
c_{1,n} = \begin{cases}1/(2\pi), &\text{for $n=0$,}\\
\sqrt{2}/(2\pi), & \text{for $n \ge 1$,}\end{cases}, \quad
\bar{c}_{1,n} = \begin{cases}1, & \text{for $n=0$,}\\
\sqrt{2}/2, &\text{for $n \ge 1$.}\end{cases}
\]
\end{prop}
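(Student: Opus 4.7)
The plan is to verify that the three properties asserted for $\mathcal{C}_d$ in Proposition \ref{prop:basis} (orthonormality, Hilbert basis property, and the convolution identity) all go through for $d=1$ once we substitute $\lambda=0$, replace the two Gegenbauer identities by the corresponding ones for $C_n^0$ stated just before Proposition \ref{prop:basis}, and correct the normalising constants to account for the special behaviour at $n=0$. Recall that for $d=1$ we have $\sigma_1 = 2\pi$, $C_n^0(\langle u,v\rangle) = \cos(n\,\theta(u,v))$ and in particular $C_n^0(1)=1$ for all $n$.

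First I would compute the $L^2$-norms. Applying the $\lambda=0$ analogue of \eqref{eq:repkernel} with $u=v$ and integrating yields, for $n\ge 1$,
\[
\lVert C_n^0(\langle\cdot,\cdot\rangle)\rVert_{L^2}^2 = \int_{\mathbb{S}^1}\!\!\int_{\mathbb{S}^1} C_n^0(\langle u,v\rangle)^2\,du\,dv = \pi \int_{\mathbb{S}^1} C_n^0(1)\,dv = 2\pi^2,
\]
giving $c_{1,n}=1/(\pi\sqrt{2}) = \sqrt{2}/(2\pi)$, while for $n=0$ the integral is $(2\pi)^2 = 4\pi^2$, giving $c_{1,0}=1/(2\pi)$. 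The off-diagonal cases $k\neq n$ produce $0$ by the same identities, so $\mathcal{C}_1$ is an orthonormal system. Completeness then follows from the fact that isotropic functions in $L^2_{1,\mathcal{I}}$ correspond to functions of the geodesic distance $\theta\in[0,\pi]$, and $\{\cos(n\theta)\}_{n\ge 0}$ is an orthogonal basis of $L^2([0,\pi])$ (equivalently, of even functions in $L^2([-\pi,\pi])$).

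For the convolution identity, I would simply substitute $E_{1,n}=c_{1,n}C_n^0(\langle\cdot,\cdot\rangle)$ into the definition of $\circledast$ and apply the two integral formulas preceding Proposition \ref{prop:basis}. For $k,n\ge 1$ this gives
\[
E_{1,k}\circledast E_{1,n} = c_{1,k}c_{1,n}\,\delta_{k,n}\pi\,C_n^0(\langle\cdot,\cdot\rangle) = \delta_{k,n}\frac{\sqrt{2}}{2}E_{1,n},
\]
so $\bar c_{1,n}=\sqrt{2}/2$; for $k=n=0$ the integral equals $2\pi$ and yields $E_{1,0}\circledast E_{1,0} = (2\pi)c_{1,0}^2\cdot 1 = (1/(2\pi))\cdot 1 = E_{1,0}$, hence $\bar c_{1,0}=1$; and the mixed case $k=0$, $n\ge 1$ (or vice versa) vanishes because $\int_{\mathbb{S}^1}C_n^0(\langle w,v\rangle)\,dw=0$ for $n\ge 1$, so $\delta_{k,n}$ suffices.

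There is no real obstacle here: the only subtlety is the bookkeeping for $n=0$, where the $\lambda=0$ case behaves differently (the factor $\pi$ is replaced by $2\pi$), which is precisely what forces the two-case definition of $c_{1,n}$ and $\bar c_{1,n}$.
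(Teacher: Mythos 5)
Your verification is correct: the norm computations, the orthogonality and completeness via the Fourier cosine basis on $[0,\pi]$, and the case-by-case evaluation of the convolution using the $\lambda=0$ integral identities all check out and yield exactly the stated constants $c_{1,n}$ and $\bar c_{1,n}$. The paper omits the proof entirely (``not hard to prove''), and your direct verification is precisely the intended argument, including the correct handling of the $n=0$ case where the factor $\pi$ becomes $2\pi$.
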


Propositions \ref{prop:basis} and \ref{prop:basisd1} imply that, for any function $f \in L_{d,\mathcal{I}}^2$, we have
\[
f \stackrel{L^2}{=} \sum_{n \in \mathbb{N}_0} \langle f,E_{d,n}\rangle_{L^2} E_{d,n},
\]
where $\stackrel{L^2}{=}$ means that the series on the right hand side converges unconditionally in $L^2$ to the left hand side. We call the basis $\mathcal{C}_d$ the \emph{Gegenbauer basis} of $L_{d,\mathcal{I}}^2$. The coefficients $\langle f,E_{d,n}\rangle_{L^2}$ are termed the \emph{Gegenbauer coefficients} of $f$.

\begin{prop}\label{prop:convcoef}
For any $f \in L_{d,\mathcal{I}}^2$, $n \in \mathbb{N}_0$, we have
\[
f \circledast E_{d,n} = \bar{c}_{d,n}\langle f, E_{d,n} \rangle_{L^2}E_{d,n}.
\]
\end{prop}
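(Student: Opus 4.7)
The plan is to expand $f$ in the Gegenbauer basis and use the convolution table from Proposition \ref{prop:basis} (and Proposition \ref{prop:basisd1} if $d=1$) term by term, justifying the interchange of summation and convolution by boundedness.

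More precisely, since $\mathcal{C}_d = \{E_{d,k}\}_{k \in \mathbb{N}_0}$ is an orthonormal basis of $L^2_{d,\mathcal{I}}$, we may write
\[
f \stackrel{L^2}{=} \sum_{k \in \mathbb{N}_0} \langle f, E_{d,k}\rangle_{L^2}\, E_{d,k}.
\]
By Proposition \ref{prop:convbound}, the map $h \mapsto h \circledast E_{d,n}$ is a bounded linear operator on $L^2_{d,\mathcal{I}}$, with operator norm at most $\lVert E_{d,n}\rVert_{L^2} = 1$. Applying this operator to the partial sums and passing to the limit yields
\[
f \circledast E_{d,n} = \sum_{k \in \mathbb{N}_0} \langle f, E_{d,k}\rangle_{L^2}\, (E_{d,k} \circledast E_{d,n}),
\]
with convergence in $L^2$.

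By Proposition \ref{prop:basis} (respectively Proposition \ref{prop:basisd1} for $d=1$), we have $E_{d,k} \circledast E_{d,n} = \delta_{k,n}\, \bar{c}_{d,n}\, E_{d,n}$, so only the term $k=n$ survives and we obtain
\[
f \circledast E_{d,n} = \bar{c}_{d,n}\, \langle f, E_{d,n}\rangle_{L^2}\, E_{d,n},
\]
as claimed.

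There is really no obstacle here: the only thing to be slightly careful about is the interchange of the infinite sum and the convolution, which is immediate from the bilinearity of $\circledast$ together with the $L^2$-boundedness in \eqref{eq:convbound}. Everything else is a direct reading off of the convolution action of the basis elements established in the preceding propositions.
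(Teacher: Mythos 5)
Your proposal is correct and follows essentially the same route as the paper: both expand $f$ in the Gegenbauer basis, use the bound $\lVert h \circledast E_{d,n}\rVert_{L^2} \le \lVert h\rVert_{L^2}\lVert E_{d,n}\rVert_{L^2}$ from Proposition \ref{prop:convbound} to pass from the partial sums to the limit, and then read off the single surviving term from $E_{d,k}\circledast E_{d,n} = \delta_{k,n}\bar{c}_{d,n}E_{d,n}$. The paper merely phrases the limit step as a triangle-inequality estimate on $\lVert (f-f_N)\circledast E_{d,n}\rVert_{L^2}$ rather than as continuity of a bounded operator, which is the same argument.
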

\begin{proof}
For $N \in \mathbb{N}$ we set $f_N = \sum_{k=0}^N \langle f,E_{d,k}\rangle_{L^2} E_{d,k}$. Then $f_N$ converges to $f$ in $L^2$. We obtain
\begin{multline*}
\lVert f \circledast E_{d,n} - \bar{c}_{d,n} \langle f,E_{d,n} \rangle_{L^2} E_{d,n} \rVert_{L^2} \\
\le \lVert f \circledast E_{d,n} - f_N \circledast E_{d,n} \rVert_{L^2}+ \lVert f_N \circledast E_{d,n} - \bar{c}_{d,n}\langle f,E_{d,n} \rangle_{L^2} E_{d,n} \rVert_{L^2}.
\end{multline*}
The last summand on the right hand side is zero by the definition of $f_N$ and Proposition \ref{prop:basis}. By Proposition \ref{prop:convbound} we obtain 
\[
\lVert f \circledast E_{d,n} - f_N \circledast E_{d,n} \rVert_{L^2} =  \lVert (f - f_N) \circledast E_{d,n} \rVert_{L^2} \le \lVert f - f_N \rVert_{L^2} \lVert E_{d,n} \rVert_{L^2} \to 0,
\]
as $N \to \infty$.
\end{proof}

\begin{cor}\label{cor:convcoefs}
For any $f \in L_{d,\mathcal{I}}^2$, $n \in \mathbb{N}_0$ we have
\[
\langle f\circledast f, E_{d,n} \rangle_{L^2} = \bar{c}_{d,n} \langle f, E_{d,n} \rangle_{L^2}^2.
\]
\end{cor}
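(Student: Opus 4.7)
The plan is to combine the polarisation-style identity \eqref{eq:convnorm} with the two explicit computations already provided by Propositions \ref{prop:basis} and \ref{prop:convcoef}. Specialising \eqref{eq:convnorm} to $g = E_{d,n}$ gives
\[
\lVert f\circledast E_{d,n}\rVert_{L^2}^2 = \langle f\circledast f,\, E_{d,n}\circledast E_{d,n}\rangle_{L^2},
\]
and the result will fall out by evaluating both sides separately.

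First I would simplify the left-hand side using Proposition \ref{prop:convcoef}: since $f\circledast E_{d,n} = \bar{c}_{d,n}\langle f, E_{d,n}\rangle_{L^2}\, E_{d,n}$ and $\lVert E_{d,n}\rVert_{L^2} = 1$ (because $\mathcal{C}_d$ is an orthonormal basis by Propositions \ref{prop:basis} and \ref{prop:basisd1}), the LHS equals $\bar{c}_{d,n}^2\langle f,E_{d,n}\rangle_{L^2}^2$. Next I would simplify the right-hand side using Proposition \ref{prop:basis} (respectively Proposition \ref{prop:basisd1} when $d=1$): $E_{d,n}\circledast E_{d,n} = \bar{c}_{d,n}E_{d,n}$, so the RHS is $\bar{c}_{d,n}\langle f\circledast f, E_{d,n}\rangle_{L^2}$.

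Equating the two sides gives $\bar{c}_{d,n}\langle f\circledast f, E_{d,n}\rangle_{L^2} = \bar{c}_{d,n}^2\langle f,E_{d,n}\rangle_{L^2}^2$, and dividing by $\bar{c}_{d,n}$ yields the claim. The only tiny point that needs checking is that $\bar{c}_{d,n} \neq 0$, which is immediate from its explicit formula in each of Propositions \ref{prop:basis} and \ref{prop:basisd1}. There is no real obstacle here — the corollary is essentially a bookkeeping consequence of the machinery already developed. (Alternatively, one could prove it in one line by observing that convolution with an isotropic, hence kernel-symmetric, function is self-adjoint on $L_{d,\mathcal{I}}^2$ via Fubini, so that $\langle f\circledast f, E_{d,n}\rangle_{L^2} = \langle f, f\circledast E_{d,n}\rangle_{L^2}$ and then invoking Proposition \ref{prop:convcoef} once.)
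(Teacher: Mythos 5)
Your proof is correct and is essentially the paper's own argument: both combine $E_{d,n}\circledast E_{d,n}=\bar{c}_{d,n}E_{d,n}$, equation \eqref{eq:convnorm} with $g=E_{d,n}$, and Proposition \ref{prop:convcoef}, merely read in a different order. The self-adjointness alternative you sketch in parentheses is also valid, but the main route matches the paper.
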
 
\begin{proof}
We have
\begin{align*}
\langle f\circledast f, E_{d,n} \rangle_{L^2} &= (\bar{c}_{d,n})^{-1}\langle f\circledast f, E_{d,n} \circledast E_{d,n}\rangle_{L^2}= (\bar{c}_{d,n})^{-1}\lVert f \circledast E_{d,n}\rVert_{L^2}^2\\
&= (\bar{c}_{d,n})^{-1}\lVert \bar{c}_{d,n} \langle f,E_{d,n} \rangle_{L^2} E_{d,n} \rVert_{L^2}^2 = \bar{c}_{d,n} \langle f,E_{d,n} \rangle_{L^2}^2,
\end{align*}
where we used Propositions \ref{prop:basis} and \ref{prop:basisd1}, equation \eqref{eq:convnorm}, and Proposition \ref{prop:convcoef} in this order.
\end{proof}

The following theorem gives a necessary condition for the existence of convolution roots in $L^2_{d,\mathcal{I}}$. In the interesting special case of nonnegative Gegenbauer coefficients this condition is also sufficient.
\begin{thm}\label{thm:convrep}
If a function $f \in L_{d,\mathcal{I}}^2$ can be represented as $f = g \circledast g$ for some $g \in L_{d,\mathcal{I}}^2$ then
\begin{equation}\label{eq:convcond}
\sum_{n =0}^{\infty} (\bar{c}_{d,n})^{-1} |\langle f, E_{d,n} \rangle_{L^2}| < \infty.
\end{equation}
If \eqref{eq:convcond} holds and $\langle f, E_{d,n} \rangle_{L^2} \ge 0$ for all $n \in \mathbb{N}_0$, then there exists a $g \in L_{d,\mathcal{I}}^2$ such that $f = g \circledast g$. The coefficients of $g$ in the Gegenbauer basis can be chosen to be nonnegative.
\end{thm}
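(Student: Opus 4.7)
The plan is to treat the two implications separately, and in both cases work entirely through the Gegenbauer coefficients, exploiting Corollary~\ref{cor:convcoefs} and the Parseval identity in the orthonormal basis $\mathcal{C}_d$.

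For the necessity direction, suppose $f = g \circledast g$ with $g \in L^2_{d,\mathcal I}$. First I apply Corollary~\ref{cor:convcoefs} to write
\[
\langle f, E_{d,n}\rangle_{L^2} \;=\; \langle g \circledast g, E_{d,n}\rangle_{L^2} \;=\; \bar c_{d,n}\,\langle g, E_{d,n}\rangle_{L^2}^{\,2},
\]
which shows in particular that every coefficient $\langle f, E_{d,n}\rangle_{L^2}$ is nonnegative. Dividing by $\bar c_{d,n}$ and summing yields
\[
\sum_{n=0}^{\infty}(\bar c_{d,n})^{-1}|\langle f, E_{d,n}\rangle_{L^2}| \;=\; \sum_{n=0}^{\infty}\langle g, E_{d,n}\rangle_{L^2}^{\,2} \;=\; \lVert g\rVert_{L^2}^{2} \;<\; \infty
\]
by Parseval applied to the Gegenbauer basis (Propositions~\ref{prop:basis} and~\ref{prop:basisd1}), giving~\eqref{eq:convcond}.

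For the sufficient direction I construct $g$ explicitly in the Gegenbauer basis. Set
\[
g \;:=\; \sum_{n=0}^{\infty} a_{d,n}\,E_{d,n}, \qquad a_{d,n} \;:=\; \sqrt{(\bar c_{d,n})^{-1}\,\langle f, E_{d,n}\rangle_{L^2}},
\]
where the square root is well defined by the nonnegativity assumption. Since the partial sums form a Cauchy sequence whose squared norms are controlled precisely by~\eqref{eq:convcond}, the series converges unconditionally in $L^2$, so $g \in L^2_{d,\mathcal I}$ and its Gegenbauer coefficients are $\langle g, E_{d,n}\rangle_{L^2} = a_{d,n} \ge 0$. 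Applying Corollary~\ref{cor:convcoefs} in the opposite direction gives
\[
\langle g\circledast g, E_{d,n}\rangle_{L^2} \;=\; \bar c_{d,n}\, a_{d,n}^{\,2} \;=\; \langle f, E_{d,n}\rangle_{L^2}
\]
for every $n$, and since both $f$ and $g\circledast g$ lie in $L^2_{d,\mathcal I}$ and share all Gegenbauer coefficients, Parseval forces $f = g\circledast g$ in $L^2$.

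The steps are essentially bookkeeping; the only mildly delicate point is ensuring that the series defining $g$ actually converges in $L^2_{d,\mathcal I}$, but this is immediate once one notices that condition~\eqref{eq:convcond} is exactly $\sum_n a_{d,n}^{2} < \infty$, i.e.\ the Parseval identity for the candidate $g$. No additional regularity or exchange-of-limits argument is needed, since convolution with a basis element $E_{d,n}$ acts diagonally by Proposition~\ref{prop:convcoef} and the coefficient identity of Corollary~\ref{cor:convcoefs} applies to any $L^2_{d,\mathcal I}$ function.
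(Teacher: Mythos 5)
Your proof is correct and follows essentially the same route as the paper: both directions are handled by reading off the identity $\langle f,E_{d,n}\rangle_{L^2}=\bar c_{d,n}\langle g,E_{d,n}\rangle_{L^2}^{2}$ from Corollary~\ref{cor:convcoefs} and observing that condition~\eqref{eq:convcond} is precisely the Parseval/$\ell^2$ criterion for the candidate root to lie in $L^2_{d,\mathcal I}$. Your explicit remark that the necessity direction also forces the coefficients of $f$ to be nonnegative is a nice touch, but the argument is otherwise identical to the paper's.
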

\begin{proof}
The Hilbert space $L_{d,\mathcal{I}}^2$ is isometric to the space $\ell^2$ \citep[Corollary V.4.13]{Werner2002}. Therefore $\sum_{n \in \mathbb{N}_0} a_n E_{d,n} \in L_{d,\mathcal{I}}^2$ if and only if $(a_n)_{n \in \mathbb{N}_0} \in \ell^2$, or, equivalently, $\sum_{n=0}^{\infty} a_n^2 < \infty$.  
Suppose now that $f$ is given by $f = g \circledast g$ for some $g \in L_{d,\mathcal{I}}^2$. By Corollary \ref{cor:convcoefs} we have that 
\[
\langle g, E_{d,n} \rangle_{L^2} = \pm (\bar{c}_{d,n})^{-1/2} |\langle f, E_{d,n} \rangle_{L^2}|^{1/2},
\]
hence 
\[
\sum_{n = 0}^{\infty} (\bar{c}_{d,n})^{-1} |\langle f, E_{d,n} \rangle_{L^2}| < \infty.
\]
For the reverse implication set $g = \sum_{n \in \mathbb{N}_0} (\bar{c}_{d,n})^{-1/2} \langle f, E_{d,n} \rangle_{L^2}^{1/2} E_{d,n}$. By assumption $g \in L_{d,\mathcal{I}}^2$ and by Corollary \ref{cor:convcoefs} we have for any $n \in \mathbb{N}_0$, that 
\[ \langle g\circledast g ,E_{d,n}\rangle_{L^2} = \bar{c}_{d,n}\langle g,E_{d,n}\rangle_{L^2} = \langle f,E_{d,n}\rangle_{L^2}.\]
With Parseval's equality \citep[Theorem V.4.9]{Werner2002} this yields the claim.
\end{proof}

We conclude this section with a proposition that shows that convolution products can be uniformly approximated with respect to the Gegenbauer basis $\mathcal{C}_d$.
\begin{prop}\label{prop:unifconv}
If $f \in L_{d,\mathcal{I}}^2$ is given by $f = g\circledast g$ for some $g \in L_{d,\mathcal{I}}^2$, then for every permutation $\sigma: \mathbb{N} \to \mathbb{N}$, the sequence $(f_N)_{N \in \mathbb{N}}$ with $f_N = \sum_{k=0}^N \langle f,E_{d,\sigma(k)}\rangle_{L^2} E_{d,\sigma(k)}$ converges uniformly to $f$.
\end{prop}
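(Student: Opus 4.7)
The plan is to show that the series $\sum_{n} \langle f, E_{d,n}\rangle_{L^2} E_{d,n}$ converges absolutely in the supremum norm on $\mathbb{S}^d\times \mathbb{S}^d$. Absolute convergence in the Banach space $C(\mathbb{S}^d\times \mathbb{S}^d)$ is unconditional, so every rearrangement converges uniformly to the same limit, and the proposition then reduces to identifying that limit with $f$.

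First I would bound $\|E_{d,n}\|_\infty$. Since the Gegenbauer polynomial $C_n^{(d-1)/2}$ attains its maximum absolute value on $[-1,1]$ at the endpoint $1$ (and $|C_n^0(\cos\vartheta)|=|\cos(n\vartheta)|\le 1 = C_n^0(1)$ in the $d=1$ case), the definitions of $c_{d,n}$ and $\bar c_{d,n}$ in Propositions \ref{prop:basis} and \ref{prop:basisd1} combine to give the clean identity $\|E_{d,n}\|_\infty = \sigma_d^{-1}\bar c_{d,n}^{-1}$ in all dimensions $d\ge 1$. The hypothesis $f = g\circledast g$ then allows me to invoke the necessary condition of Theorem \ref{thm:convrep}, which yields $\sum_{n}\bar c_{d,n}^{-1}|\langle f, E_{d,n}\rangle_{L^2}|<\infty$. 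Combining the two gives $\sum_n\|\langle f, E_{d,n}\rangle_{L^2}E_{d,n}\|_\infty<\infty$, so the series converges absolutely in $C(\mathbb{S}^d\times \mathbb{S}^d)$ to some continuous function $h$, uniformly and independently of the order of summation.

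Finally, I would argue $h=f$. The Gegenbauer expansion of $f$ converges to $f$ in $L^2$, and uniform convergence implies $L^2$-convergence, so $h=f$ in $L^2$. Both $h$ and $f=g\circledast g$ are continuous on $\mathbb{S}^d\times\mathbb{S}^d$: the former as a uniform limit of polynomials, the latter because the map $u\mapsto g(u,\cdot)\in L^2(\mathbb{S}^d)$ is continuous under the rotation action, whereby the Cauchy--Schwarz estimate used in the proof of Proposition \ref{prop:convbound} gives joint continuity of the convolution integral. Two continuous functions that coincide in $L^2$ coincide pointwise, so $h\equiv f$. The only mildly technical step I expect is this continuity of $g\circledast g$; everything else is bookkeeping of results already established.
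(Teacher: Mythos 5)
Your proof is correct, but it takes a genuinely different route from the paper's. The paper argues directly with the convolution structure: setting $g_N=\sum_{k=0}^N\langle g,E_{d,\sigma(k)}\rangle_{L^2}E_{d,\sigma(k)}$, it uses Proposition \ref{prop:convcoef} and Corollary \ref{cor:convcoefs} to write $f-f_N=g\circledast(g-g_N)$, and then the sup-norm bound \eqref{eq:convbound} gives $\sup|f-f_N|\le\sigma_d^{-1}\lVert g\rVert_{L^2}\lVert g-g_N\rVert_{L^2}\to 0$ by unconditional $L^2$-convergence of the basis expansion of $g$. You instead run a Weierstrass $M$-test on the expansion of $f$ itself: the extremal property $|C_n^\lambda(x)|\le C_n^\lambda(1)$ yields the identity $\lVert E_{d,n}\rVert_\infty=\sigma_d^{-1}\bar c_{d,n}^{-1}$ (which checks out in all dimensions, including $d=1$), and the necessary condition \eqref{eq:convcond} of Theorem \ref{thm:convrep} then gives absolute convergence in $C(\mathbb{S}^d\times\mathbb{S}^d)$, hence unconditional uniform convergence. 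The paper's argument is shorter and needs no facts about Gegenbauer polynomials beyond Section 2, and it converges directly to the pointwise-defined function $g\circledast g$ with no identification step. Your argument buys a formally stronger conclusion (absolute, not merely unconditional, convergence in the sup norm) and makes transparent that the summability condition \eqref{eq:convcond} is exactly what drives uniform convergence; the price is the extra step of identifying the continuous limit $h$ with $g\circledast g$ pointwise, which you correctly flag and which does go through via continuity of $u\mapsto g(u,\cdot)$ in $L^2$ and the Cauchy--Schwarz estimate from Proposition \ref{prop:convbound}. Both proofs are sound.
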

\begin{proof}
Let $g_N=\sum_{k=0}^N \langle g,E_{d,\sigma(k)}\rangle_{L^2}E_{d,\sigma(k)}$. By Corollary \ref{cor:convcoefs} and Proposition \ref{prop:convcoef} we have 
\begin{align*}
f - f_N &= g \circledast g - \sum_{k=0}^N \bar{c}_{d,\sigma(k)} \langle g,E_{d,\sigma(k)} \rangle_{L^2}^2 E_{d,\sigma(k)}\\
&= g \circledast g - \sum_{k=0}^N \langle g,E_{d,\sigma(k)} \rangle_{L^2} g \circledast E_{d,\sigma(k)} = g \circledast g - g \circledast g_N = g \circledast (g-g_N).
\end{align*}
Now, we can apply Proposition \ref{prop:convbound} to the last term and use the unconditional $L^2$-convergence of $g_N$ to $g$ in order to obtain the claim.
\end{proof}

\section{Convolution roots}\label{sec:conv}

Schoenberg's characterisation of the classes $\Psi_d$ is summarised in the following theorem; cf.~\citet{Schoenberg1942}.
\begin{thm}[Schoenberg]\label{thm:Schoenberg}
The class $\Psi_d$ consists of all functions of the form
\begin{equation*}
\psi(\vartheta) = \sum_{n=0}^{\infty} b_{d,n} \frac{C_n^{(d-1)/2}(\cos\vartheta)}{C_n^{(d-1)/2}(1)}, \quad \text{for $\vartheta\in [0,\pi]$,}
\end{equation*}
with nonnegative coefficients $b_{d,n}$, such that $\sum_{n=0}^{\infty}b_{d,n} = 1$. If $d=1$, then
\begin{equation}\label{eq:schoenone}
b_{1,0} = \frac{1}{\pi}\int_0^{\pi}\psi(\vartheta)d\vartheta, \quad \text{and}\quad b_{1,n} = \frac{2}{\pi}\int_0^{\pi}\cos(n\vartheta)\psi(\vartheta)d\vartheta, \quad \text{for $n \ge 1$.}
\end{equation}
If $d \ge 2$, then for $n \in \mathbb{N}_0$
\begin{equation}\label{eq:schoentwo}
b_{d,n} = \frac{2n+d-1}{2^{3-d}\pi}\frac{\left(\Gamma(\frac{d-1}{2})\right)^2}{\Gamma(d-1)}\int_0^{\pi}\big\{C_n^{(d-1)/2}(\cos\vartheta)\big\}(\sin\vartheta)^{d-1}\psi(\vartheta)d\vartheta.
\end{equation}
\end{thm}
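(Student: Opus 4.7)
The plan splits into a sufficiency part (any such series lies in $\Psi_d$) and a necessity part (every $\psi\in\Psi_d$ admits such an expansion with the stated coefficient formulas), treating the latter as the substantive part.

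For sufficiency I would invoke the addition theorem for spherical harmonics: for $d\ge 2$ and $n\in\mathbb{N}_0$ there is an orthonormal basis $\{Y_{n,1},\dots,Y_{n,N_n}\}$ of the space of degree-$n$ spherical harmonics on $\mathbb{S}^d$ satisfying
\[
\frac{C_n^{(d-1)/2}(\langle u,v\rangle)}{C_n^{(d-1)/2}(1)}=\frac{1}{N_n}\sum_{k=1}^{N_n}Y_{n,k}(u)Y_{n,k}(v).
\]
Hence each $\psi_n(\vartheta)=C_n^{(d-1)/2}(\cos\vartheta)/C_n^{(d-1)/2}(1)$ is the kernel of a nonnegative quadratic form, is positive definite, and has $\psi_n(0)=1$. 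Nonnegative convex combinations inherit both properties, and the conditions $b_{d,n}\ge 0$, $\sum b_{d,n}=1$, together with $|\psi_n|\le 1$, give absolute and uniform convergence on $[0,\pi]$ via the Weierstrass M-test, so the limit is continuous and positive definite. The $d=1$ case uses $C_n^0(\cos\vartheta)=\cos(n\vartheta)$ and ordinary Fourier series.

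For necessity, let $\psi\in\Psi_d$ and set $f(u,v)=\psi(\theta(u,v))$. Since $\psi$ is continuous on $[0,\pi]$, $f\in L^2_{d,\mathcal{I}}$, and Propositions \ref{prop:basis}/\ref{prop:basisd1} give $f\stackrel{L^2}{=}\sum_n\langle f,E_{d,n}\rangle_{L^2}E_{d,n}$. By isotropy the double integral defining $\langle f,E_{d,n}\rangle_{L^2}$ reduces, after fixing one point and passing to polar coordinates about it (with volume element $\sigma_{d-1}(\sin\vartheta)^{d-1}d\vartheta$), to the one-dimensional integral appearing in \eqref{eq:schoenone} or \eqref{eq:schoentwo}; collecting the constants with Legendre's duplication formula for $\Gamma$ shows that the Gegenbauer expansion of $f$ translates exactly into the claimed series for $\psi$ with the coefficient formulas \eqref{eq:schoenone}, \eqref{eq:schoentwo}. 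Nonnegativity of $b_{d,n}$ then follows by inserting the addition theorem into that double integral: each summand becomes $\int\!\!\int f(u,v)Y_{n,k}(u)Y_{n,k}(v)\,du\,dv$, which is nonnegative because $f$ is positive definite and the integral is the limit of the quadratic forms \eqref{eq:1} applied to Riemann-sum discretisations of $Y_{n,k}$.

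The main obstacle is to upgrade the $L^2$ identity to pointwise (in fact uniform) equality on $[0,\pi]$ and to verify $\sum b_{d,n}=1$. My plan is to introduce a Poisson-type deformation
\[
\psi_r(\vartheta)=\sum_{n=0}^\infty b_{d,n}\,r^n\,\frac{C_n^{(d-1)/2}(\cos\vartheta)}{C_n^{(d-1)/2}(1)},\qquad r\in[0,1),
\]
whose series converges uniformly for each $r<1$. Using the generating identity for the Gegenbauer polynomials, $\psi_r(\theta(\cdot,\cdot))$ is the spherical convolution of $f$ with an explicit nonnegative approximate identity on $\mathbb{S}^d$, so continuity of $\psi$ forces $\psi_r\to\psi$ uniformly on $[0,\pi]$ as $r\uparrow 1$. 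Evaluating at $\vartheta=0$ gives $\sum_n b_{d,n}r^n\to 1$, and Abel's theorem together with $b_{d,n}\ge 0$ yields $\sum_n b_{d,n}=\psi(0)=1$. Because $|C_n^{(d-1)/2}(\cos\vartheta)|\le C_n^{(d-1)/2}(1)$, the Weierstrass M-test then gives absolute and uniform convergence of the series on $[0,\pi]$; its continuous sum agrees with $\psi$ in $L^2$, hence everywhere, completing the proof.
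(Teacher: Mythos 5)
The paper does not prove this theorem at all: it is quoted as Schoenberg's classical characterisation with a citation to \citet{Schoenberg1942}, so there is no in-paper argument to compare against. Your proposal is, in essence, a correct reconstruction of the classical proof: the addition theorem gives positive definiteness of each normalised Gegenbauer kernel and nonnegativity of the coefficients (via $\iint f(u,v)Y_{n,k}(u)Y_{n,k}(v)\,du\,dv\ge 0$ for a continuous positive definite $f$), the reduction of the double integral to polar coordinates yields \eqref{eq:schoenone} and \eqref{eq:schoentwo}, and the Abel-summation step is exactly what is needed to get $\sum_n b_{d,n}=\psi(0)=1$ — note that the $L^2$ expansion alone only gives square-summability of the coefficients, so this step cannot be skipped. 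Two small points to tighten: the addition theorem as you wrote it is missing the surface-measure normalisation (the right-hand side should carry a factor $\sigma_d/N_n$ rather than $1/N_n$ if the $Y_{n,k}$ are orthonormal for the Hausdorff measure), which is harmless here; and the bare generating function $\sum_n r^nC_n^{\lambda}(t)=(1-2rt+r^2)^{-\lambda}$ does not itself produce the multiplier $r^n$ on the $n$-th Gegenbauer coefficient under $\circledast$ (by \eqref{eq:repkernel} convolution introduces the extra factor $\sigma_d(d-1)/(2n+d-1)$) — you need the genuine Poisson kernel $(1-r^2)(1-2rt+r^2)^{-(d+1)/2}$, whose expansion $\sum_n\frac{2n+d-1}{d-1}r^nC_n^{(d-1)/2}(t)$ supplies exactly the compensating factor and which is manifestly nonnegative with unit mass. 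With that substitution your approximate-identity argument, and hence the whole proof, goes through.
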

For a function $\psi \in \Psi_d$, we call the associated coefficients $b_{d,n}$ as given by \eqref{eq:schoenone} or \eqref{eq:schoentwo}, respectively, the \emph{$d$-dimensional Schoenberg coefficients} of $\psi$. 

A function $\psi \in \Psi_d$ is \emph{strictly positive definite} if the inequality in \eqref{eq:1} is strict for all systems of pairwise distinct points, unless all the coefficients are zero. \citet{ChenMenegattoETAL2003} show that $\psi \in \Psi_d$ for $d \ge 2$ is strictly positive definite if and only if its Schoenberg coefficients $b_{d,n}$ are strictly positive for infinitely many even and infinitely many odd integers $n$. The corresponding result for $\Psi_{\infty}$ is was derived by \citet{Menegatto1994}. Despite recent advances \cite{Sun2005} there is no concise characterisation of the strictly positive definite functions in $\Psi_1$ in terms of non-zero Schoenberg coefficients available.

We prove the following result, which is slightly more detailed than Theorem \ref{thm:convolution}.
\begin{thm}\label{thm:3.2}
For any $\psi \in \Psi_d$ there exists a function $g \in L^2_{d,\mathcal{I}}$, such that 
\[
\psi(\theta(u,v)) = (g \circledast g)(u,v), \quad \text{for all $u,v \in \mathbb{S}^d$,}
\]
and $g$ has nonnegative Gegenbauer coefficients. 
\end{thm}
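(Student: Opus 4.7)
My plan is to reduce the theorem to Theorem \ref{thm:convrep} by realising the Schoenberg expansion of $\psi$ as the Gegenbauer expansion of the associated isotropic kernel. Write $f(u,v) := \psi(\theta(u,v))$; since $\psi$ is continuous on $[0,\pi]$ and isotropic in the sense of \eqref{eq:isodef}, $f$ is bounded and hence lies in $L^2_{d,\mathcal{I}}$. To apply Theorem \ref{thm:convrep} I need to verify (i) that the Gegenbauer coefficients $\langle f,E_{d,n}\rangle_{L^2}$ are nonnegative and (ii) that $\sum_n (\bar{c}_{d,n})^{-1}\langle f,E_{d,n}\rangle_{L^2}$ is finite.

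The central computation is to express $\langle f,E_{d,n}\rangle_{L^2}$ in terms of the Schoenberg coefficients $b_{d,n}$ furnished by Theorem \ref{thm:Schoenberg}. Inserting the series $\psi(\theta(u,v))=\sum_k b_{d,k}C_k^{(d-1)/2}(\langle u,v\rangle)/C_k^{(d-1)/2}(1)$ and integrating against $E_{d,n}=c_{d,n}C_n^{(d-1)/2}(\langle\cdot,\cdot\rangle)$, the orthogonality identity worked out in the proof of Proposition \ref{prop:basis} collapses everything to the diagonal, and a direct simplification using the explicit forms of $c_{d,n}$ and $\bar{c}_{d,n}$ yields
\[
\langle f,E_{d,n}\rangle_{L^2} = \sigma_d\,b_{d,n}\,\bar{c}_{d,n}.
\]
Nonnegativity is then inherited from Schoenberg, and the summability reduces cleanly to $\sum_n (\bar{c}_{d,n})^{-1}\langle f,E_{d,n}\rangle_{L^2} = \sigma_d\sum_n b_{d,n} = \sigma_d<\infty$. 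Theorem \ref{thm:convrep} therefore produces $g\in L^2_{d,\mathcal{I}}$ with nonnegative Gegenbauer coefficients such that $g\circledast g=f$ as elements of $L^2$.

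The remaining task is to upgrade the $L^2$ identity to a pointwise one for every $u,v\in\mathbb{S}^d$. For this I would invoke Proposition \ref{prop:unifconv}: the partial sums $f_N=\sum_{k=0}^N \langle f,E_{d,k}\rangle_{L^2} E_{d,k}$ converge uniformly to $g\circledast g$. Plugging in the coefficients above, one finds $\sigma_d \bar{c}_{d,k}c_{d,k}=1/C_k^{(d-1)/2}(1)$, so $f_N(u,v)$ coincides exactly with the $N$th partial sum of the Schoenberg series, which converges uniformly to $\psi(\theta(u,v))$ because $\sum b_{d,n}=1$ and $|C_n^{(d-1)/2}(\cos\vartheta)|\le C_n^{(d-1)/2}(1)$. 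Matching the two uniform limits delivers the pointwise equality $\psi(\theta(u,v))=(g\circledast g)(u,v)$. The main obstacle is bookkeeping the constants $c_{d,n}$, $\bar{c}_{d,n}$ and $C_n^{(d-1)/2}(1)$ so the summability condition collapses precisely to $\sum b_{d,n}<\infty$; once this is done the case $d=1$ is handled identically, invoking Proposition \ref{prop:basisd1} in place of Proposition \ref{prop:basis}.
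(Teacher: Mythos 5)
Your proof is correct and follows essentially the same route as the paper: identify the Gegenbauer coefficients of $\psi(\theta(\cdot,\cdot))$ as positive multiples $\sigma_d\,\bar{c}_{d,n}\,b_{d,n}$ of the Schoenberg coefficients, so that nonnegativity and the summability condition \eqref{eq:convcond} follow from $\sum_n b_{d,n}=1$, and then invoke Theorem \ref{thm:convrep}. The only cosmetic differences are that you obtain the coefficient identity from the orthonormality of $\mathcal{C}_d$ applied to the uniformly convergent Schoenberg series rather than by rewriting the integral formula \eqref{eq:schoentwo} as an integral over $\mathbb{S}^d\times\mathbb{S}^d$, and that you make explicit (via Proposition \ref{prop:unifconv}) the upgrade from the $L^2$ identity to the pointwise one, which the paper leaves implicit.
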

\begin{proof}
First, let $d \ge 2$, $\psi \in \Psi_d$. The nonnegative Schoenberg coefficients of $\psi$ are connected to the Gegenbauer coefficients of $\psi(\theta(\cdot,\cdot))$ via 
\begin{align*}
b_{d,n} &= \frac{2n + d - 1}{2^{3-d}\pi} \frac{(\Gamma(\frac{d-1}{2}))^2}{\Gamma(d-1)} \int_0^{\pi} C_n^{(d-1)/2}(\cos\vartheta)(\sin\vartheta)^{d-1} \psi(\vartheta) d\vartheta\\
&=\frac{2n + d - 1}{2^{3-d}\pi} \frac{(\Gamma(\frac{d-1}{2}))^2}{\Gamma(d-1)} \left(2\pi\sigma_d\prod_{k=2}^{d-1}\int_0^{\pi}(\sin\vartheta)^{k-1}d\vartheta\right)^{-1}\\ & \qquad\qquad \times \int_{\mathbb{S}^d \times \mathbb{S}^d} C_n^{(d-1)/2}(\langle u,v \rangle)  \psi(\theta(u,v)) du dv\\
&= \frac{(\Gamma(\frac{d-1}{2}))^2\Gamma(\frac{d}{2})(d-1)}{\Gamma(d-1)2^{4-d}\pi^{(d+1)/2}} (\bar{c}_{d,n})^{-1} \langle E_{d,n}, \psi(\theta(\cdot,\cdot))\rangle_{L^2}.
\end{align*}
The quotient in the previous line is positive and only depends on $d$. We denote it by $\alpha_d$. In particular, $\langle E_{d,n}, \psi(\theta(\cdot,\cdot))\rangle_{L^2} \ge 0$ for all $n \in \mathbb{N}_0$.
We have
\[
\frac{C_n^{(d-1)/2}(\langle \cdot, \cdot \rangle)}{C_{n}^{(d-1)/2}(1)} = \sigma_d \bar{c}_{d,n} E_{d,n},
\]
hence
\[
\psi(\theta(\cdot,\cdot)) = \alpha_d \sigma_d \sum_{n=0}^{\infty} \langle E_{d,n}, \psi(\theta(\cdot,\cdot))\rangle_{L^2} E_{d,n}.
\]
By Theorem \ref{thm:Schoenberg} 
\[
1 = \sum_{n=0}^{\infty} b_{n,d} = \alpha_d \sum_{n=0}^{\infty} (\bar{c}_{d,n})^{-1} \langle E_{d,n}, \psi(\theta(\cdot,\cdot))\rangle_{L^2} = \alpha_d \sum_{n=0}^{\infty} (\bar{c}_{d,n})^{-1} |\langle E_{d,n}, \psi(\theta(\cdot,\cdot))\rangle_{L^2}|,
\]
hence Theorem \ref{thm:convrep} yields the claim. For $d=1$ we have
\[
b_{1,n} = \begin{cases}1/(2\pi)\langle E_{1,n},\psi(\theta(\cdot,\cdot))\rangle_{L^2}, &\text{if $n=0$,}\\
\sqrt{2}/(2\pi)\langle E_{1,n},\psi(\theta(\cdot,\cdot))\rangle_{L^2}, & \text{if $n \ge 1$,}\end{cases}
\]
hence we can apply the same arguments as above.
\end{proof}

\begin{rem} For a function $\psi \in \Psi_{d+k} \subset \Psi_{d}$ for some $k \ge 1$, Theorem \ref{thm:3.2} yields spherical convolution roots $g_{d+k} \in L^2_{d+k,\mathcal{I}}$ and $g_{d} \in L^2_{d,\mathcal{I}}$ with respect to the convolution in $\mathbb{S}^{d+k}$ and $\mathbb{S}^d$, respectively. The associated functions $\bar{g}_{d+k}$, $\bar{g}_{d}$ are both defined on $[0,\pi]$ and one would hope for a simple functional relationship between them, but it remains elusive thus far. However, on the level of Schoenberg coefficients, the functions $g_{d+2}$ and $g_d$ are easily put in relation using \citet[Corollary 3]{Gneiting2011}.
\end{rem}

Let $\psi \in \Psi_d$. The construction in the proofs of Theorems \ref{thm:convrep} and \ref{thm:3.2} shows that the class $\mathcal{G}_d(\psi)$ of all spherical convolution roots $g \in L_{d,\mathcal{I}}^2$ of $\psi$ is given by all functions $g \in L_{d,\mathcal{I}}^2$, whose Gegenbauer coefficients are given by 
\begin{equation}\label{eq:sig}
\big(\alpha_d^{-1/2}\sigma_n b_{d,n}^{1/2}\big)_{n \in \mathbb{N}_0},
\end{equation}
where $(b_{d,n})_{n \in \mathbb{N}_0}$ are the Schoenberg coefficients of $\psi$ and $(\sigma_n)_{n\in\mathbb{N}_0}$ is a sequence with $\sigma_n \in \{-1,1\}$; cf.~Figure \ref{fig:1}. In Theorem \ref{thm:3.2} we identify a unique convolution root by setting $\sigma_n=1$ for all $n \in \mathbb{N}_0$. This choice resolves the identifyability issue when inferring the kernel of L\'evy based models for star-shaped random particles from their covariance or correlation structure as mentioned in Section \ref{sec:intro}. See also \citet{HansenThorarinsdottirETAL2011,Ziegel2011}.

We conclude the section by using the convolution representation to calculate the Schoenberg coefficients of the function 
\[
\iota_d: [0,\pi] \to \mathbb{R}, \vartheta \mapsto \frac{1}{\nu_d(r)}\overline{\mathbbm{1}\{\theta(\cdot,\cdot) \le r\} \circledast \mathbbm{1}\{\theta(\cdot,\cdot) \le r\}}(\vartheta),
\]
where $r \in (0,\pi/2]$, and $\nu_d$ is the normalising constant ensuring that $\iota_d(0) = 1$. The convolution is taken in $\mathbb{S}^d \times \mathbb{S}^d$. It is a short calculation to show that $\nu_1(r) = 2r$. For $d\ge 2$ the normalising constant is given by
\begin{equation}\label{eq:nud}
\nu_d(r) = \sigma_{d-1} \int_0^r (\sin \vartheta)^{d-1} d\vartheta.
\end{equation}
The function $\iota_2$ has been calculated explicitly by \citet{TovchigrechkoVakser2001}. \citet{EstradeIstas2010} provide a recursive formula for the functions $\iota_d$, $d \ge 2$.
\begin{lem}\label{lem:3.3}
Let $r \in (0,\pi/2]$. The function $\mathbbm{1}\{\theta(\cdot,\cdot) \le r\} \in L_{d,\mathcal{I}}^2$ has Gegenbauer coefficients $\{\omega_{d,n}\}_{n \in \mathbb{N}_0}$ given, for $n \ge 1$, by 
\[
\omega_{d,n} = c_{d,n} \sigma_d\sigma_{d-1} \frac{d-1}{n(n+d-1)} (\sin(r))^d C_{n-1}^{(d+1)/2}(\cos(r)),\quad \text{for $d \ge 2$},
\] 
and $\omega_{1,n} = (2\sqrt{2}/n) \sin(nr)$. Finally, $\omega_{d,0}=\nu_d(r)$, where $\nu_d(r)$ is given at \eqref{eq:nud}.
\end{lem}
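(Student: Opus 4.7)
The plan is to unfold $\omega_{d,n} = \langle \mathbbm{1}\{\theta(\cdot,\cdot)\le r\}, E_{d,n}\rangle_{L^2}$ from its definition and reduce it to a one-dimensional integral of a Gegenbauer polynomial against the spherical weight $(\sin\vartheta)^{d-1}$. First, since the integrand is isotropic, I would fix $v_0 \in \mathbb{S}^d$ and exploit invariance to write
\[
\omega_{d,n} = c_{d,n}\,\sigma_d \int_{\mathbb{S}^d} \mathbbm{1}\{\theta(u,v_0)\le r\}\, C_n^{(d-1)/2}(\langle u,v_0\rangle)\, du,
\]
and then, for $d\ge 2$, switch to polar coordinates on $\mathbb{S}^d$ centred at $v_0$, in which the surface measure decomposes as $(\sin\vartheta)^{d-1}d\vartheta$ times the measure on $\mathbb{S}^{d-1}$ of total mass $\sigma_{d-1}$. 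This gives
\[
\omega_{d,n} = c_{d,n}\,\sigma_d\,\sigma_{d-1}\int_0^r C_n^{(d-1)/2}(\cos\vartheta)(\sin\vartheta)^{d-1}d\vartheta.
\]

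The heart of the proof is then evaluating this one-dimensional integral, for $n\ge 1$, in closed form. My plan is to verify that the function
\[
G(\vartheta) := (\sin\vartheta)^{d}\,\bigl(C_n^{(d-1)/2}\bigr)'(\cos\vartheta)
\]
is (up to a factor) an antiderivative of the integrand. Differentiating $G$ and applying the Gegenbauer differential equation $(1-x^2)y'' - dx\,y' + n(n+d-1)y = 0$ satisfied by $y = C_n^{(d-1)/2}(x)$ gives $G'(\vartheta) = n(n+d-1)\,C_n^{(d-1)/2}(\cos\vartheta)(\sin\vartheta)^{d-1}$. Since $G(0)=0$, integration and the standard identity $(C_n^{\lambda})'(x) = 2\lambda\,C_{n-1}^{\lambda+1}(x)$ applied with $\lambda = (d-1)/2$ convert $G(r)/(n(n+d-1))$ into the stated expression $\frac{d-1}{n(n+d-1)}(\sin r)^d C_{n-1}^{(d+1)/2}(\cos r)$. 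Multiplying by $c_{d,n}\sigma_d\sigma_{d-1}$ yields the formula for $\omega_{d,n}$ when $d\ge 2$.

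The case $d=1$ is handled separately but analogously: using $C_n^0(\cos\vartheta)=\cos(n\vartheta)$, the reduction gives $\omega_{1,n} = c_{1,n}\sigma_1\cdot 2\int_0^r \cos(n\vartheta)\,d\vartheta = (2\sqrt{2}/n)\sin(nr)$ after plugging in $c_{1,n}=\sqrt{2}/(2\pi)$ and $\sigma_1 = 2\pi$. The $n=0$ case is immediate from $C_0^{(d-1)/2}\equiv 1$ (respectively $C_0^0\equiv 1$) together with $c_{d,0}=\sigma_d^{-1}$, which collapses the integral to the normalising constant $\nu_d(r)$ given at~\eqref{eq:nud}. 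The only nontrivial step is the antiderivative identity via the Gegenbauer ODE; everything else is bookkeeping, but one does have to take care with the constants $c_{d,n}$, $\sigma_d$, and the derivative identity to land on exactly the form in the statement.
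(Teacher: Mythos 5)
Your proof is correct and follows essentially the same route as the paper: reduce the Gegenbauer coefficient to the one-dimensional integral $c_{d,n}\sigma_d\sigma_{d-1}\int_0^r C_n^{(d-1)/2}(\cos\vartheta)(\sin\vartheta)^{d-1}\,d\vartheta$ and evaluate it via an antiderivative identity for Gegenbauer polynomials. The only difference is cosmetic: the paper cites the identity \citetalias[18.9.20]{dlmf} directly, whereas you rederive it from the Gegenbauer differential equation together with $(C_n^{\lambda})'=2\lambda C_{n-1}^{\lambda+1}$, which makes the argument self-contained but is mathematically the same step.
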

\begin{proof} Suppose first that $d \ge 2$. We have
\begin{align*}
\langle\mathbbm{1}&\{\theta(\cdot,\cdot) \le r\},E_{d,n} \rangle_{L_d^2}  = c_{d,n}\int_{\mathbb{S}^d} \int_{\mathbb{S}^d}\mathbbm{1}\{\theta(u,v) \le r\} C_{n}^{(d-1)/2}(\langle u,v\rangle) du dv\\
&= c_{d,n}\sigma_d^2 \left(\int_0^\pi (\sin\vartheta)^{d-1} d\vartheta\right)^{-1}\int_0^{\pi} \mathbbm{1}\{\vartheta \le r\}C_n^{(d-1)/2}(\cos\vartheta)(\sin\vartheta)^{d-1} d\vartheta\\
&= c_{d,n}\sigma_d \sigma_{d-1} \int_{\cos(r)}^1 C_n^{(d-1)/2}(u) (1-u^2)^{(d-2)/2} du.
\end{align*}
Using $c_{d,0} = \sigma_d^{-1}$, the formula for $n=0$ follows.
By \citetalias[18.9.20]{dlmf} we have for $n \ge 1$
\begin{equation}\label{eq:Cnder2}
\frac{d}{dx} \left((1-x^2)^{d/2}C_{n-1}^{(d+1)/2}(x)\right) = - \frac{n(n+d-1)}{d-1}(1-x^2)^{(d-2)/2}C_{n}^{(d-1)/2}(x),
\end{equation}
which implies the lemma. The case $d=1$ is a simple calculation.
\end{proof}

Using the relation between the Gegenbauer and the Schoenberg coefficients calculated in the proof of Theorem \ref{thm:3.2} we obtain the following corollary.
\begin{cor}\label{cor:iotacoef}
The function $\iota_d$ is in $\Psi_d$. For $d\ge 2$ its Schoenberg coefficients are given by
\[
b_{d,0} = \frac{\nu_d(r)}{\sigma_d^2} \frac{\Gamma(\frac{d-1}{2})^2\Gamma(\frac{d}{2})(d-1)}{\Gamma(d-1)2^{4-d}\pi^{(d+1)/2}},
\]
and, for $n \ge 1$,
\[
b_{d,n} = \gamma_d(r)(2n+d-1)C_n^{(d-1)/2}(1)\left(\frac{C_{n-1}^{(d+1)/2}(\cos r )}{C_{n-1}^{(d+1)/2}(1)}\right)^2,
\]
where
\[
\gamma_{d}(r) = \frac{1}{\nu_d(r)}\frac{\Gamma(\frac{d-1}{2})^2 2^{d-2} \pi^{(d-1)/2}}{d^2 \Gamma(\frac{d}{2})}(\sin r )^{2d}. 
\]
For $d=1$, we have $b_{1,0} = r/(4\pi^3)$ and $b_{1,n} = \sqrt{2}\sin^2(nr)/(rn^2\pi^2)$ for $n \ge 1$.
\end{cor}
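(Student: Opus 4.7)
The plan is to unwind the definition of $\iota_d$ through the Gegenbauer expansion of the indicator $\mathbbm{1}\{\theta(\cdot,\cdot)\le r\}$ supplied by Lemma \ref{lem:3.3}. Set $f = \mathbbm{1}\{\theta(\cdot,\cdot)\le r\}\in L^2_{d,\mathcal{I}}$ and observe that $\iota_d(\theta(u,v)) = \nu_d(r)^{-1}(f\circledast f)(u,v)$ is a positive multiple of a spherical self-convolution, hence positive definite. Continuity follows from Proposition \ref{prop:unifconv} applied to the uniformly convergent Gegenbauer expansion of $f\circledast f$; together with the normalisation $\iota_d(0)=1$, this shows $\iota_d\in\Psi_d$.

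To compute the Schoenberg coefficients for $d\ge 2$, I would combine Corollary \ref{cor:convcoefs} with Lemma \ref{lem:3.3} to obtain
\[
\langle \iota_d(\theta(\cdot,\cdot)), E_{d,n}\rangle_{L^2} = \nu_d(r)^{-1}\bar{c}_{d,n}\omega_{d,n}^2.
\]
Expanding $E_{d,n}(u,v) = c_{d,n}C_n^{(d-1)/2}(\langle u,v\rangle)$ and matching term-by-term against Schoenberg's expansion $\iota_d(\vartheta) = \sum_n b_{d,n} C_n^{(d-1)/2}(\cos\vartheta)/C_n^{(d-1)/2}(1)$, I would read off $b_{d,n} = c_{d,n}\bar{c}_{d,n}C_n^{(d-1)/2}(1)\omega_{d,n}^2/\nu_d(r)$; equivalently, the relation $b_{d,n}=\alpha_d(\bar{c}_{d,n})^{-1}\langle E_{d,n},\iota_d(\theta(\cdot,\cdot))\rangle_{L^2}$ from the proof of Theorem \ref{thm:3.2} may be invoked directly. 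The constants cancel cleanly using the definitions of $c_{d,n}, \bar{c}_{d,n}$, yielding a compact intermediate form from which the stated answer can be extracted.

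Next I would substitute the explicit expression for $\omega_{d,n}$ from Lemma \ref{lem:3.3}. The key algebraic step is to isolate the factor $(2n+d-1)C_n^{(d-1)/2}(1)(C_{n-1}^{(d+1)/2}(\cos r)/C_{n-1}^{(d+1)/2}(1))^2$ claimed in the statement, which relies on the identity
\[
\frac{C_{n-1}^{(d+1)/2}(1)}{n(n+d-1)C_n^{(d-1)/2}(1)} = \frac{1}{d(d-1)},
\]
an immediate consequence of \eqref{eq:Cnone} together with $\Gamma(n+d) = (n+d-1)\Gamma(n+d-1)$ and $\Gamma(d+1)=d\Gamma(d)$. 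The $n$-dependent factors then reorganise into the desired shape, and what remains is an $n$-independent prefactor involving $\sigma_{d-1}^2/\sigma_d$, which is rewritten as the claimed $\gamma_d(r)$ by a direct application of Legendre's duplication formula $\Gamma(\tfrac{d-1}{2})\Gamma(\tfrac{d}{2}) = 2^{2-d}\sqrt{\pi}\Gamma(d-1)$.

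For the $n=0$ case, $\omega_{d,0} = \nu_d(r)$ from Lemma \ref{lem:3.3} plugged into the compact formula yields $b_{d,0}$, which is brought into the stated form by the same duplication identity. The case $d=1$ is a direct computation following the same template with Proposition \ref{prop:basisd1}: the cosine basis replaces the Gegenbauer polynomials, and the explicit $\omega_{1,n} = (2\sqrt{2}/n)\sin(nr)$ leads at once to $b_{1,n} = \sqrt{2}\sin^2(nr)/(rn^2\pi^2)$ for $n\ge 1$ (and analogously for $b_{1,0}$). The main obstacle is really the Gamma-function bookkeeping in simplifying the prefactor to $\gamma_d(r)$; no new idea is needed beyond the duplication formula, but care is required to collect the powers of $2$, $\pi$, and the $\Gamma$ factors consistently across $d$.
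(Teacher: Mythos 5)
Your proposal follows the paper's own (one-line) derivation exactly: the paper obtains the corollary by combining Lemma \ref{lem:3.3} with Corollary \ref{cor:convcoefs} and the Gegenbauer--Schoenberg coefficient relation from the proof of Theorem \ref{thm:3.2}, which is precisely your $b_{d,n}=c_{d,n}\bar c_{d,n}C_n^{(d-1)/2}(1)\,\omega_{d,n}^2/\nu_d(r)$ (which simplifies to $\omega_{d,n}^2/(\sigma_d\nu_d(r))$). The supporting identities you invoke --- the ratio $C_{n-1}^{(d+1)/2}(1)/C_n^{(d-1)/2}(1)=n(n+d-1)/(d(d-1))$ from \eqref{eq:Cnone} and Legendre's duplication formula --- are correct and suffice for the remaining bookkeeping.
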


\begin{figure}
\begin{center}
\includegraphics{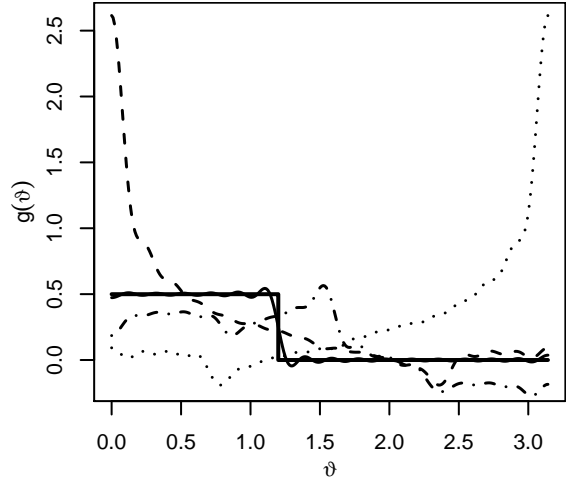}
\caption{Different convolution roots $g$ of $\iota_2(r)$ for $r=1.2$. The solid lines display the function $\nu_d(1.2)^{-1/2}\mathbbm{1}\{\vartheta \le 1.2\}$ and its approximation by the first 32 Gegenbauer polynomials. The dashed line is the convolution root with nonnegative Gegenbauer coefficients. The dotted line represents the convolution root with $\sigma_n=(-1)^n$, whereas the dash-dotted line has $\sigma_n = (-1)^{[n/2]}$, with $(\sigma_n)_{n \in \mathbb{N}_0}$ as in \eqref{eq:sig}.\label{fig:1}}
\end{center}
\end{figure}

This example illustrates that the convolution root constructed in Theorem \ref{thm:3.2} may not be the most natural one. The Gegenbauer coefficients of $\nu_d(r)^{-1/2}\mathbbm{1}\{\theta(\cdot,\cdot) \le r\}$ take both, positive and negative, signs; cf.~Lemma \ref{lem:3.3}. Hence, it is not the convolution root of $\iota_d$ that results from the construction in Theorem \ref{thm:3.2}; cf.~Figure \ref{fig:1}. The function $\iota_d$ is an example of a member of $\Psi_d$ that is supported on a spherical cap of radius $2r$. If we would like to have a convolution root that is supported on a spherical cap of radius $r$, such as $\nu_d(r)^{-1/2}\mathbbm{1}\{\theta(\cdot,\cdot) \le r\}$ for $\iota_d$, it may not be suitable to choose all coefficients of the convolution root nonnegative. In the Euclidean case, the existence of convolution roots with half-support, so-called \emph{Boas-Kac roots}, is discussed in \citet{EhmGneitingETAL2004} building on the classical result of \citet{BoasKac1945}. It remains an open problem whether Boas-Kac roots always exist for functions in $\Psi_d$.

\section{Differentiability}

\subsection{Proof of Theorem \ref{thm:diff}}\label{sec:diff1}
We denote by $\tilde{\Psi}_d$ the space of all continuous functions $\varphi:[0,\pi]\to \mathbb{R}$ which are such that the function $\varphi(\theta(\cdot,\cdot)):\mathbb{S}^d\times \mathbb{S}^d \to \mathbb{R}$ is positive definite. The difference between the spaces $\Psi_d$ and $\tilde{\Psi}_d$ is that the members $\psi \in \Psi_d \subset \tilde{\Psi}_d$ are additionally required to fulfil $\psi(0)=1$. Theorems \ref{thm:Schoenberg} and \ref{thm:3.2} also hold for the class $\tilde{\Psi}_d$ with the obvious modification that we need to require $\sum_{n=0}^{\infty} b_{d,n} < \infty$ instead of $\sum_{n=0}^{\infty} b_{d,n} =1$ for the Schoenberg coefficients in the former.

For the proof of Theorem \ref{thm:diff} on the differentiability of positive definite functions on spheres we show the following proposition, which can be applied iteratively to yield the assertion.
\begin{prop}\label{prop:der}
Let $d \ge 1$, $\psi \in \tilde{\Psi}_{d+2}$. Then $\psi$ is continuously differentiable in $(0,\pi)$ and its derivative can be written as
\[
\psi'(\vartheta) = \frac{1}{\sin\vartheta} \left(f_1(\vartheta) - f_2(\vartheta)\right),
\]
where $f_1, f_2 \in \tilde{\Psi}_d$.
\end{prop}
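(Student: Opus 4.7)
The plan is to start from the Schoenberg expansion of $\psi\in\tilde{\Psi}_{d+2}$, namely
\[
\psi(\vartheta)=\sum_{n\ge 0}b_{d+2,n}\,\frac{C_n^{(d+1)/2}(\cos\vartheta)}{C_n^{(d+1)/2}(1)}
\]
with $b_{d+2,n}\ge 0$ and $\sum_n b_{d+2,n}<\infty$, and to reorganise its termwise derivative so that $\sin\vartheta\cdot\psi'(\vartheta)$ visibly decomposes as a difference of two functions in $\tilde{\Psi}_d$. For $d\ge 2$ I would first shift $n\mapsto n+1$ in the identity
$\frac{d}{dx}\bigl[(1-x^2)^{d/2}C_{n-1}^{(d+1)/2}(x)\bigr]=-\frac{n(n+d-1)}{d-1}(1-x^2)^{(d-2)/2}C_n^{(d-1)/2}(x)$
already used in the proof of Lemma~\ref{lem:3.3}; dividing by $(1-x^2)^{(d-2)/2}$ and evaluating at $x=\cos\vartheta$ yields
\[
\sin\vartheta\,\frac{d}{d\vartheta}C_n^{(d+1)/2}(\cos\vartheta)=-d\cos\vartheta\,C_n^{(d+1)/2}(\cos\vartheta)+\frac{(n+1)(n+d)}{d-1}C_{n+1}^{(d-1)/2}(\cos\vartheta).
\]
Eliminating $\cos\vartheta\,C_n^{(d+1)/2}(\cos\vartheta)$ through the three-term Gegenbauer recurrence and using the reduction formula
\[
C_n^{(d+1)/2}(x)-C_{n-2}^{(d+1)/2}(x)=\frac{2n+d-1}{d-1}C_n^{(d-1)/2}(x),
\]
itself a consequence of the same Lemma~\ref{lem:3.3} identity combined with the recurrence, collapses the expression to the clean two-term form
\[
\sin\vartheta\,\frac{d}{d\vartheta}C_n^{(d+1)/2}(\cos\vartheta)=\frac{n(n+1)}{d-1}C_{n+1}^{(d-1)/2}(\cos\vartheta)-d\,C_{n-1}^{(d+1)/2}(\cos\vartheta).
\]

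Summing against $b_{d+2,n}/C_n^{(d+1)/2}(1)$ then produces $\sin\vartheta\,\psi'(\vartheta)=f_1(\vartheta)-f_2(\vartheta)$, where $f_1$ appears directly as a series in the order $(d-1)/2$ Gegenbauer family and $f_2$ as a series in the order $(d+1)/2$ family. To certify everything I would combine the standard bound $|C_n^\lambda(x)|\le C_n^\lambda(1)$ for $\lambda>0$ with the closed-form ratios $C_{n-1}^{(d+1)/2}(1)/C_n^{(d+1)/2}(1)=n/(n+d)$ and $C_{n+1}^{(d-1)/2}(1)/C_n^{(d+1)/2}(1)=d(d-1)/((n+1)(n+d))$; these bound the $n$-th summand of each series by a uniform constant times $b_{d+2,n}$. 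Hence both series converge absolutely and uniformly on $[0,\pi]$, and reading off their Schoenberg coefficients shows $f_1\in\tilde{\Psi}_d$ directly and $f_2\in\tilde{\Psi}_{d+2}\subset\tilde{\Psi}_d$. Uniform convergence of the termwise-differentiated series on each compact $[\delta,\pi-\delta]\subset(0,\pi)$ (using $\sin\vartheta\ge\sin\delta$) legitimates the termwise differentiation, so $\psi$ is continuously differentiable on $(0,\pi)$ with the required representation.

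The principal obstacle is the case $d=1$, where the key reduction formula carries the singular coefficient $1/(d-1)$. For this case I would instead exploit that $C_n^1(\cos\vartheta)=\sin((n+1)\vartheta)/\sin\vartheta$, so that $\sin\vartheta\cdot\psi(\vartheta)=\sum_{n\ge 0}(b_{3,n}/(n+1))\sin((n+1)\vartheta)$ is an absolutely convergent sine series whose termwise derivative $\sum_{n\ge 0}b_{3,n}\cos((n+1)\vartheta)$ again converges uniformly on $[0,\pi]$ because $\sum_n b_{3,n}<\infty$. Differentiating the product $\sin\vartheta\cdot\psi(\vartheta)$ and rearranging gives
\[
\sin\vartheta\,\psi'(\vartheta)=\sum_{n\ge 0}b_{3,n}\cos((n+1)\vartheta)-\cos\vartheta\,\psi(\vartheta),
\]
which is a difference of two members of $\tilde{\Psi}_1$: the first summand is a cosine series with nonnegative, absolutely summable coefficients, while the second is the product of $\cos\vartheta\in\tilde{\Psi}_1$ with $\psi\in\tilde{\Psi}_3\subset\tilde{\Psi}_1$, which is again positive definite on $\mathbb{S}^1$ because products of positive definite functions are positive definite.
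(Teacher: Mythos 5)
Your proof is correct, but it takes a genuinely different route from the paper's. The paper uses $\tilde{\Psi}_{d+2}\subset\tilde{\Psi}_d$ to expand $\psi$ in the \emph{$d$-dimensional} Gegenbauer basis, differentiates termwise, and converts back to order $(d-1)/2$ via \citetalias[18.9.8]{dlmf}; the resulting coefficients behave like $n\,b_{d,n}$, which is not obviously summable, so the paper must invoke the dimension-walk relation \citet[Corollary 3]{Gneiting2011} to split them into two summable nonnegative pieces, and additionally \citet[Corollary 4]{Gneiting2011} together with Lemma \ref{lem:Ana1} to dispose of the boundary terms $n\,b_{d,n}\to 0$. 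You instead work directly in the \emph{$(d+2)$-dimensional} expansion and derive the two-term identity $\sin\vartheta\,\frac{d}{d\vartheta}C_n^{(d+1)/2}(\cos\vartheta)=\frac{n(n+1)}{d-1}C_{n+1}^{(d-1)/2}(\cos\vartheta)-d\,C_{n-1}^{(d+1)/2}(\cos\vartheta)$ (which I have checked, along with your normalization ratios: both resulting series carry the coefficient $\frac{dn}{n+d}\,b_{d+2,n}\le d\,b_{d+2,n}$). Summability and nonnegativity are then immediate, no Tauberian lemma or dimension-walk identity is needed, and the uniform convergence argument for termwise differentiation on compacta of $(0,\pi)$ is standard. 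Your $f_1$ coincides exactly with the paper's (Schoenberg coefficients $d\frac{n-1}{n+d-1}b_{d+2,n-1}$), while your $f_2$ is exhibited naturally as a member of $\tilde{\Psi}_{d+2}\subset\tilde{\Psi}_d$ rather than re-expanded in the $d$-dimensional basis as the paper does. Your separate treatment of $d=1$ via $C_n^1(\cos\vartheta)=\sin((n+1)\vartheta)/\sin\vartheta$ and the Schur product theorem is also valid and again avoids the paper's machinery. The only cosmetic weakness is the offhand claim that the reduction formula $C_n^{(d+1)/2}-C_{n-2}^{(d+1)/2}=\frac{2n+d-1}{d-1}C_n^{(d-1)/2}$ follows from the Lemma \ref{lem:3.3} identity; it is cleaner to cite it directly as \citetalias[18.9.7]{dlmf}. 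Overall your argument is shorter and more self-contained than the paper's.
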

\begin{proof} By \citetalias[18.9.19]{dlmf} the derivative of $C_n^{\alpha}$ for $\alpha > 0$ and $n \ge 1$ is given by
\begin{equation}\label{eq:Cnder}
\frac{d}{dx} C_n^{\alpha}(x) = 2\alpha C_{n-1}^{\alpha+1}(x).
\end{equation}
We assume first that $d \ge 2$. As $\tilde{\Psi}_d \supset \tilde{\Psi}_{d+2}$ we can write $\psi$ as 
\[
\psi(\vartheta) = \sum_{n=0}^{\infty} b_{d,n}\frac{C_{n}^{(d-1)/2}(\cos\vartheta)}{C_{n}^{(d-1)/2}(1)}, \quad \vartheta \in [0,\pi],
\]
with non-negative coefficients $b_{d,n}$ such that $\sum_{n=0}^{\infty}b_{d,n} < \infty$; see Theorem \ref{thm:Schoenberg}. For $N \in \mathbb{N}$, $\vartheta \in [0,\pi]$ we define
\[
\psi_N(\vartheta) = \sum_{n=0}^N b_{d,n}\frac{C_{n}^{(d-1)/2}(\cos\vartheta)}{C_{n}^{(d-1)/2}(1)}.
\]
By Proposition \ref{prop:unifconv} $\psi_N$ converges uniformly to $\psi$.
Let $\vartheta \in (0,\pi)$. By \eqref{eq:Cnder}, the derivative of $\psi_N$ is given by
\begin{align}
\psi_N'(\vartheta) &= \sum_{n=1}^N b_{d,n} (d-1) \frac{C_{n-1}^{(d+1)/2}(\cos\vartheta)}{C_{n}^{(d-1)/2}(1)} (-\sin\vartheta)\nonumber\\
&= \frac{-1}{\sin \vartheta}\sum_{n=1}^N b_{d,n} \frac{1}{C_{n}^{(d-1)/2}(1)}\left(\frac{(n+d-2)(n+d-1)}{2n+d-1}C_{n-1}^{(d-1)/2}(\cos\vartheta)\right. \nonumber\\
&\qquad \qquad\left.- \frac{n(n+1)}{2n+d-1}C_{n+1}^{(d-1)/2}(\cos\vartheta)\right)\nonumber\\
&= \frac{1}{\sin\vartheta} \sum_{n=1}^N b_{d,n} \frac{n(n+d-1)}{2n+d-1}\left(\frac{C_{n+1}^{(d-1)/2}(\cos\vartheta)}{C_{n+1}^{(d-1)/2}(1)} - \frac{C_{n-1}^{(d-1)/2}(\cos\vartheta)}{C_{n-1}^{(d-1)/2}(1)}\right),\nonumber
\end{align}
where we used \eqref{eq:Cnone}, and
\begin{multline*}
C_n^{(d+1)/2}(\cos\vartheta)(\sin\vartheta)^2 = \frac{(n+d-1)(n+d)}{(d-1)(2n+d+1)}C_n^{(d-1)/2}(\cos\vartheta) \\- \frac{(n+1)(n+2)}{(d-1)(2n+d+1)}C_{n+2}^{(d-1)/2}(\cos\vartheta);
\end{multline*}
see \citetalias[equation (18.9.8)]{dlmf}.
Therefore
\begin{align*}
(\sin\vartheta&) \psi_N'(\vartheta)= -b_{d,1} \frac{d}{d+1} \\
&+ \sum_{n=0}^{N} \left(\frac{n(n+d-1)}{2n+d-1}b_{d,n} - \frac{(n+2)(n+d+1)}{2n+d+3}b_{d,n+2}\right)\frac{C_{n+1}^{(d-1)/2}(\cos\vartheta)}{C_{n+1}^{(d-1)/2}(1)}\\
&+ \sum_{n=N-1}^N b_{d,n+2} \frac{(n+2)(n+d+1)}{2n+d+3}\frac{C_{n+1}^{(d-1)/2}(\cos\vartheta)}{C_{n+1}^{(d-1)/2}(1)}.
\end{align*}
The last term in the above equation converges to zero uniformly in $\vartheta$ as $N \to \infty$ by \citet[Corollary 4]{Gneiting2011} and Lemma \ref{lem:Ana1}. We will omit it in the sequel. Using \citet[Corollary 3(b)]{Gneiting2011}, we obtain
\begin{multline*}
\frac{n(n+d-1)}{2n+d-1}b_{d,n} - \frac{(n+2)(n+d+1)}{2n+d+3}b_{d,n+2} 
\\= \frac{dn}{n+d}b_{d+2,n} -  \frac{d(2n+d+1)(n+2)}{(2n+d+3)(n+d)}b_{d,n+2}.
\end{multline*}
Hence,
\begin{align*}
(\sin\vartheta) \psi_N'(\vartheta)&= d\sum_{n=0}^{N}\frac{n}{n+d}b_{d+2,n}\frac{C_{n+1}^{(d-1)/2}(\cos\vartheta)}{C_{n+1}^{(d-1)/2}(1)} \\
&\quad - d\sum_{n=1}^{N+2}\frac{(2n+d-3)n}{(2n+d-1)(n+d-2)}b_{d,n}\frac{C_{n-1}^{(d-1)/2}(\cos\vartheta)}{C_{n-1}^{(d-1)/2}(1)}.
\end{align*}
We set $\beta^{(1)}_0 = 0$, 
\[
\beta^{(1)}_{n} = d\frac{n-1}{n+d-1}b_{d+2,n-1},\quad \text{for $n \ge 1$,}
\]
and 
\[
\beta^{(2)}_{n} = d\frac{(2n+d-1)(n+1)}{(2n+d+1)(n+d-1)}b_{d,n+1}, \quad\text{for $n \ge 0$.}
\]
The sequences $\{\beta^{(i)}_n\}_{n \in \mathbb{N}_0}$, $i=1,2$, are nonnegative and summable by assumption. Therefore they are the Schoenberg coefficients of some functions $f_1$, $f_2 \in \tilde{\Psi}_d$. By Proposition \ref{prop:unifconv} their partial Gegenbauer sums converge uniformly, which yields the claim. 

If $d=1$, the proof uses the same arguments with \citet[Corollary 3(a)]{Gneiting2011} instead of \citet[Corollary 3(b)]{Gneiting2011}. The Schoenberg coefficients of the functions $f_1$, $f_2$ are then given by $\beta_{n}^{(1)} = ((n-1)/n) b_{3,n-1}$, $\beta_{n}^{(2)} = b_{1,n+1}$, for $n\ge 1$, and $\beta_0^{(1)}=0$, $\beta_0^{(2)}=(1/2)b_{1,1}$.
\end{proof}

\begin{lem}\label{lem:Ana1}
Let $(\alpha_n)_{n \in \mathbb{N}}$ be an increasing sequence converging to $1$, such that the sequence $(\alpha_n^n)_{n \in \mathbb{N}}$ is bounded away from $0$. Suppose that $\sum_{n=1}^{\infty} b_n < \infty$ for some sequence $(b_n)_{n \in \mathbb{N}}$ of nonnegative numbers. If
\[
b_n \ge \alpha_n b_{n+1}, \quad \text{for all $n \in \mathbb{N}$,}
\]
then $n \, b_n \to 0$ as $n \to \infty$.
\end{lem}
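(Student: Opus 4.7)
The strategy is to dominate $n b_n$ by a tail of the convergent series $\sum b_n$. Iterating the hypothesis $b_k \ge \alpha_k b_{k+1}$ downward from index $n$ to some $k \le n$ yields
\[
b_k \ge b_n \prod_{j=k}^{n-1}\alpha_j, \qquad 1 \le k \le n,
\]
so $b_k$ cannot drop much below $b_n$ unless the product on the right is small.

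The key step is to restrict to the window $k \in \{\lceil n/2\rceil,\dots,n\}$ and show that, on this window, the product is bounded below by a positive constant independent of $n$. Since $(\alpha_j)$ is increasing and $\alpha_j \le 1$, each factor in the window satisfies $\alpha_j \ge \alpha_{\lceil n/2 \rceil}$, there are $n-k \le \lceil n/2\rceil$ factors, and hence
\[
\prod_{j=k}^{n-1}\alpha_j \;\ge\; \alpha_{\lceil n/2\rceil}^{n-k} \;\ge\; \alpha_{\lceil n/2\rceil}^{\lceil n/2 \rceil} \;\ge\; c,
\]
where $c>0$ comes from the hypothesis that $(\alpha_m^m)$ is bounded away from $0$, applied at $m=\lceil n/2\rceil$.

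Summing the resulting uniform estimate $b_k \ge c\, b_n$ over $k = \lceil n/2 \rceil, \ldots, n$ (at least $n/2$ terms) gives
\[
\tfrac{n}{2}\, c\, b_n \;\le\; \sum_{k=\lceil n/2\rceil}^{n} b_k \;\le\; \sum_{k=\lceil n/2\rceil}^{\infty} b_k,
\]
and the right-hand side tends to $0$ as $n \to \infty$ because $\sum b_k$ converges. This yields $n b_n \to 0$. The argument is essentially elementary; the only point requiring care is invoking the lower bound on $\alpha_m^m$ at the index $\lceil n/2 \rceil$ rather than at $n$ itself, so that the number of terms in the window, the size of the product, and the availability of a vanishing tail of the series all mesh correctly.
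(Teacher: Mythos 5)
Your proof is correct and follows essentially the same approach as the paper: iterate the recursion to bound each $b_k$ in a window of length $\sim n/2$ from below by a constant multiple of the last term (using that the product of the $\alpha_j$'s over the window is at least $\inf_m \alpha_m^m > 0$), then compare with the vanishing tail of the convergent series. The only cosmetic difference is that you place the window at $\{\lceil n/2\rceil,\dots,n\}$ and bound $b_n$ directly, whereas the paper uses the window $\{n+1,\dots,2n\}$ to bound $b_{2n}$ and $b_{2n+1}$, which forces a (trivial) even/odd case split that your version avoids.
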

\begin{proof} Let $(\alpha_n^n)_{n \in \mathbb{N}}$ be bounded below by $C > 0$.
Let $\varepsilon > 0$, choose $n_0$, such that $\sum_{k=n+1}^{m} b_k < \varepsilon$ for all $m > n > n_0$. With $m = 2n$ we obtain
\begin{multline*}
\varepsilon > \sum_{k=n+1}^{2n} b_k \ge \sum_{k=n+1}^{2n} \prod_{j=k}^{2n-1}\alpha_j b_{2n} \ge \sum_{k=n+1}^{2n}(\alpha_{n})^{2n-k} b_{2n} \\ \ge \alpha_n^{2n} n\, b_{2n} \ge C^2\, n\, b_{2n} \ge 0.
\end{multline*}
Using the same argument for $m=2n+1$ yields the claim.
\end{proof}

\subsection{Optimality of Theorem \ref{thm:diff}}\label{sec:diff2}
In this section we show that Theorem \ref{thm:diff} is optimal for all odd dimensions using similar ideas as in \citet{Gneiting1999}. We are not aware of a function $\psi \in \Psi_2$ with discontinuous derivative. If such a function was available, our method immediately also yields the optimality of the differentiability result in even dimensions. 

We introduce a \emph{turning bands operator} for isotropic positive definite functions on spheres in analogy to the Euclidean case, where the turning bands operator originates in the work of \citet{Matheron1972}.
Let $\beta = (\beta_n)_{n \in \mathbb{N}_0}$ be a sequence of real numbers. For an integer $k \in \mathbb{Z}$ we define the sequence $\beta\circ\tau_{k}$ as follows. If $k > 0$ its members are
\[
(\beta\circ\tau_{k})_n = \begin{cases} 0, &\text{if $n < k$,}\\
\beta_{n-k}, & \text{if $n \ge k$}
\end{cases}
\]
for $n \in \mathbb{N}_0$. If $k \le 0$ we put $(\beta\circ\tau_{k})_n = \beta_{n-k}$ for all $n \in \mathbb{N}_0$. Let $d \ge 1$ be an integer. For a summable sequence $\beta=(\beta_n)_{n \in \mathbb{N}}$ of nonnegative numbers $\beta_n$ we define $\psi_d(\beta,\vartheta)$ for $\vartheta \in [0,\pi]$ as 
\[
\psi_d(\beta,\vartheta) = \sum_{n=0}^{\infty} \beta_n \frac{C_n^{(d-1)/2}(\cos\vartheta)}{C_n^{(d-1)/2}(1)} \in \tilde{\Psi}_d. 
\]

\begin{prop} \label{prop:turningbands}
Let $d\ge 1$ be an integer and let $\beta = (\beta_n)_{n \in \mathbb{N}}$ be a summable sequence of nonnegative numbers $\beta_n$. Then, for all $r \in [0,\pi]$,
\begin{equation}\label{eq:turnop}
\psi_d(\beta,r) = \beta_0 + \cos r \ \psi_{d+2}(\beta \circ \tau_{-1},r) + \frac{1}{d}\sin r \ \psi_{d+2}'(\beta \circ \tau_{-1},r),
\end{equation}
and
\begin{equation}\label{eq:invop}
\frac{1}{d}(\sin r)^d \ \psi_{d+2}(\beta \circ \tau_{-1},r)= \int_0^r(\sin\vartheta)^{d-1} (\psi_d(\beta,\vartheta)-\beta_0) d\vartheta.
\end{equation}
\end{prop}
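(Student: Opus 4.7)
The plan is to establish \eqref{eq:invop} first by term-by-term integration, and then deduce \eqref{eq:turnop} from it by differentiation. This inversion of the stated order is natural because \eqref{eq:invop} reduces cleanly to a single Gegenbauer identity, whereas \eqref{eq:turnop} involves a derivative that is most easily controlled via the integral form.

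For \eqref{eq:invop}, I would expand both sides. The left-hand side equals
\[
\frac{(\sin r)^d}{d}\sum_{n=0}^{\infty}\beta_{n+1}\frac{C_n^{(d+1)/2}(\cos r)}{C_n^{(d+1)/2}(1)},
\]
while the right-hand side, after interchanging sum and integral, becomes
\[
\sum_{n=1}^{\infty}\beta_n\int_0^r\frac{C_n^{(d-1)/2}(\cos\vartheta)}{C_n^{(d-1)/2}(1)}(\sin\vartheta)^{d-1}\,d\vartheta.
\]
The interchange is legitimate because $|C_n^{(d-1)/2}(\cos\vartheta)|\le C_n^{(d-1)/2}(1)$ for all $\vartheta\in[0,\pi]$ and $(\beta_n)$ is summable, so the partial sums admit a uniform dominant. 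Comparing coefficients reduces the claim to the per-term identity
\[
\int_0^r\frac{C_n^{(d-1)/2}(\cos\vartheta)}{C_n^{(d-1)/2}(1)}(\sin\vartheta)^{d-1}\,d\vartheta = \frac{(\sin r)^d}{d}\frac{C_{n-1}^{(d+1)/2}(\cos r)}{C_{n-1}^{(d+1)/2}(1)},\quad n\ge 1.
\]
For $d\ge 2$ this follows from integrating \eqref{eq:Cnder2} (already used in the proof of Lemma \ref{lem:3.3}) from $\cos r$ to $1$; matching the numerical constants reduces, via \eqref{eq:Cnone}, to the elementary gamma-function identity $d(d-1)C_{n-1}^{(d+1)/2}(1)=n(n+d-1)C_n^{(d-1)/2}(1)$. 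For $d=1$, using $C_n^0(\cos\vartheta)=\cos(n\vartheta)$ and $C_{n-1}^1(\cos r)=\sin(nr)/\sin r$, the identity reduces to $\int_0^r\cos(n\vartheta)\,d\vartheta=\sin(nr)/n$.

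To pass from \eqref{eq:invop} to \eqref{eq:turnop}, I would differentiate both sides with respect to $r$. Since the integrand on the right is continuous on $[0,\pi]$, the right-hand side is $C^1$ on $[0,\pi]$ with derivative $(\sin r)^{d-1}(\psi_d(\beta,r)-\beta_0)$. Consequently $r\mapsto\tfrac{1}{d}(\sin r)^d\psi_{d+2}(\beta\circ\tau_{-1},r)$ is $C^1$ on $[0,\pi]$, and because $(\sin r)^d$ is smooth and non-vanishing on $(0,\pi)$, the factor $\psi_{d+2}(\beta\circ\tau_{-1},\cdot)$ is itself $C^1$ on $(0,\pi)$. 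Applying the product rule and equating the two derivatives yields
\[
(\sin r)^{d-1}\cos r\,\psi_{d+2}(\beta\circ\tau_{-1},r)+\tfrac{1}{d}(\sin r)^d\psi_{d+2}'(\beta\circ\tau_{-1},r) = (\sin r)^{d-1}(\psi_d(\beta,r)-\beta_0),
\]
and dividing by $(\sin r)^{d-1}$ gives \eqref{eq:turnop} on $(0,\pi)$. The endpoints $r=0$ and $r=\pi$ are handled by direct substitution, using $C_n^{\alpha}(\pm 1)=(\pm 1)^n C_n^{\alpha}(1)$.

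The main obstacle is really just identifying the correct per-term integration formula; once \eqref{eq:Cnder2} is in hand, every remaining step—uniform convergence of the Gegenbauer series, extracting differentiability of $\psi_{d+2}(\beta\circ\tau_{-1},\cdot)$ from the integral representation, and bookkeeping of the normalisation constants $C_n^{\lambda}(1)$—is routine.
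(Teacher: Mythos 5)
Your proposal is correct and follows essentially the same route as the paper: establish \eqref{eq:invop} first by term-by-term integration of the Gegenbauer series using \eqref{eq:Cnder2} together with the normalisation identity from \eqref{eq:Cnone}, then obtain \eqref{eq:turnop} by differentiating \eqref{eq:invop} in $r$. The paper justifies the interchange of sum and integral by citing Proposition \ref{prop:unifconv} rather than the pointwise bound $|C_n^{(d-1)/2}(\cos\vartheta)|\le C_n^{(d-1)/2}(1)$, but this is an immaterial difference.
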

\begin{proof}
Suppose first, that $d \ge 2$.
Using Proposition \ref{prop:unifconv}, \eqref{eq:Cnder2}, and \eqref{eq:Cnone} we obtain
\begin{align*}
\int_0^r (\sin \vartheta)^{d-1}\psi_d(\beta,\vartheta) d\vartheta &= \sum_{n=0}^{\infty}\beta_n \int_0^r (\sin \vartheta)^{d-1}\frac{C_n^{(d-1)/2}(\cos\vartheta)}{C_n^{(d-1)/2}(1)}d\vartheta\\& = \beta_0\int_0^r (\sin \vartheta)^{d-1} d\vartheta + \frac{1}{d}(\sin r )^d\sum_{n=1}^{\infty} \beta_n \frac{C_{n-1}^{(d+1)/2}(\cos r)}{C_{n-1}^{(d+1)/2}(1)},\end{align*}
which implies \eqref{eq:invop}.
Differentiating both sides of \eqref{eq:invop} with respect to $r$ yields \eqref{eq:turnop}. The case $d=1$ can be shown using the same arguments.
\end{proof}

The proof of Theorem \ref{thm:diff} shows that the differentiability of a function $\psi_d(\beta,\cdot)$ only depends on the nonnegativity and the asymptotic properties of the sequence $(\beta_n)_{n \in \mathbb{N}_0}$. Therefore, for any $k \in \mathbb{Z}$ the function $\psi_d(\beta\circ\tau_k,\cdot)$ is continuously differentiable if and only if the same holds true for $\psi_d(\beta,\cdot)$. Let $c \in (0,\pi)$. Then the function
\[
\psi(\vartheta) = \max\Big\{0,\Big(1-\frac{\vartheta}{c}\Big)\Big\}, \quad \vartheta \in [0,\pi]
\]
belongs to the class $\Psi_1$ as can be shown by elementary arguments. Its first derivative does not exist at the point $\vartheta = c$. Let $\beta=(\beta_n)_{n \in \mathbb{N}_0}$ be the sequence of 1-dimensional Schoenberg coefficients of $\psi$. Let $d\ge 3$ be an odd integer. By \eqref{eq:invop} and the above remark on Theorem \ref{thm:diff}, the function $\psi_{d}(\beta \circ \tau_{-(d-1)/2},\vartheta) \in \Psi_d$ and its derivative of order $(d-1)/2$ does not exist at $\vartheta=c$.

The truncated power functions $\psi(\vartheta) = \max\{0,(1-\vartheta/c)^{\tau}\}$ were studied in detail by \citet{BeatsonzuCastellETAL2011}. They were able to show that they belong to $\Psi_d$ if $\tau \ge (d+1)/2$ for $d = 3,5,7$ and conjectured the result for all dimensions. Theorem \ref{thm:diff} immediately shows the necessity of the condition for all odd dimensions. 

\section*{Acknowledgements}
I would like to thank Tilmann Gneiting for interesting and encouraging discussions. 

\bibliographystyle{natbib}
\bibliography{biblio2}

\begin{thebibliography}{}

\bibitem[Banerjee(2005)Banerjee]{Banerjee2005}
Banerjee, S. (2005).
\newblock On geodetic distance computations in spatial modeling.
\newblock {\em Biometrics\/}, {\bfseries 61}, 617--625.

\bibitem[Beatson {\em et~al.}(2011)Beatson, zu~Castell, and
  Xu]{BeatsonzuCastellETAL2011}
Beatson, R.~K., zu~Castell, W., and Xu, Y. (2011).
\newblock A p\'olya criterion for (strict) positive definiteness on the sphere.
\newblock Preprint, \url{arXiv:1110.2437v1}.

\bibitem[Boas and Kac(1945)Boas and Kac]{BoasKac1945}
Boas, R.~P. and Kac, M. (1945).
\newblock Inequalities for {F}ourier transforms of positive functions.
\newblock {\em Duke Math. J.}, {\bfseries 12}, 189--206.
\newblock \textit{Errata} \textbf{15} (1948), 107--109.

\bibitem[Cavoretto and De Rossi(2010)Cavoretto and De Rossi]{CavorettoDeRossi2010}
Cavoretto, R. and De Rossi, A. (2010).
\newblock Fast and accurate interpolation of large scattered data sets on the
  sphere.
\newblock {\em J. Comput. Appl. Math.}, {\bfseries 234}, 1505--1521.

\bibitem[Chen {\em et~al.}(2003)Chen, Menegatto, and
  Sun]{ChenMenegattoETAL2003}
Chen, D., Menegatto, V.~A., and Sun, X. (2003).
\newblock A necessary and sufficient condition for strictly positive definite
  functions on spheres.
\newblock {\em Proc. Amer. Mat. Soc.}, {\bfseries 131}, 2733--2740.

\bibitem[{Digital Library of Mathematical Functions}(2011){Digital Library of
  Mathematical Functions}]{dlmf}
{Digital Library of Mathematical Functions} (2011).
\newblock {\em Release date 2012-03-23.}
\newblock National Institute of Standards and Technology from
  http://dlmf.nist.gov/.

\bibitem[Ehm {\em et~al.}(2004)Ehm, Gneiting, and
  Richards]{EhmGneitingETAL2004}
Ehm, W., Gneiting, T., and Richards, D. (2004).
\newblock Convolution roots of radial postitive definite functions with compact
  support.
\newblock {\em Trans. Amer. Mat. Soc.}, {\bfseries 356}, 4655--4685.

\bibitem[Estrade and Istas(2010)Estrade and Istas]{EstradeIstas2010}
Estrade, A. and Istas, J. (2010).
\newblock Ball throwing on spheres.
\newblock {\em Bernoulli\/}, {\bfseries 16}, 953--970.

\bibitem[Fasshauer and Schumaker(1998)Fasshauer and
  Schumaker]{FasshauerSchumaker1998}
Fasshauer, G.~E. and Schumaker, L.~L. (1998).
\newblock Scattered data fitting on spheres.
\newblock In M.~Daehlen, T.~Lyche, and L.~L. Schumaker, editors, {\em
  Mathematical {M}ethods for {C}urves and {S}urfaces\/}, volume~II, pages
  117--166. Vanderbilt University Press, Nashville.

\bibitem[Gneiting(1999)Gneiting]{Gneiting1999}
Gneiting, T. (1999).
\newblock On the derivatives of radial positive definite functions.
\newblock {\em J. Math. Anal. Appl.}, {\bfseries 236}, 86--93.

\bibitem[Gneiting(2012)Gneiting]{Gneiting2011}
Gneiting, T. (2012).
\newblock Strictly and non-strictly positive definite functions on spheres.
\newblock Preprint, \url{arXiv:1111.7077v4}.

\bibitem[Hansen {\em et~al.}(2011)Hansen, Thorarinsdottir, and
  Gneiting]{HansenThorarinsdottirETAL2011}
Hansen, L.~V., Thorarinsdottir, T.~L., and Gneiting, T. (2011).
\newblock L\'evy particles: {M}odelling and simulating star-shaped random sets.
\newblock {\em CSGB Research Report\/}.

\bibitem[Huang {\em et~al.}(2011)Huang, Zhang, and Robeson]{HuangZhangETAL2011}
Huang, C., Zhang, H., and Robeson, S.~M. (2011).
\newblock On the validity of commonly used covariance and variogram functions
  on the sphere.
\newblock {\em Math. Geosci.}, {\bfseries 43}, 721--733.

\bibitem[Matheron(1972)Matheron]{Matheron1972}
Matheron, G. (1972).
\newblock {Quelque Aspects de la Mont\'ee}.
\newblock Note G\'eostatistique 120, Centre de G\'eostatistique, Fontainebleau,
  France.

\bibitem[Menegatto(1994)Menegatto]{Menegatto1994}
Menegatto, V.~A. (1994).
\newblock Strictly positive definite kernels on the {H}ilbert sphere.
\newblock {\em Appl. Anal.}, {\bfseries 55}, 91--101.

\bibitem[Schoenberg(1938)Schoenberg]{Schoenberg1938}
Schoenberg, I.~J. (1938).
\newblock Metric spaces and completely monotone functions.
\newblock {\em Ann. Math.}, {\bfseries 39}, 811--841.

\bibitem[Schoenberg(1942)Schoenberg]{Schoenberg1942}
Schoenberg, I.~J. (1942).
\newblock Positive definite functions on spheres.
\newblock {\em Duke Math. J.}, {\bfseries 9}, 96--108.

\bibitem[Schreiner(1997)Schreiner]{Schreiner1997}
Schreiner, M. (1997).
\newblock Locally supported kernels for spherical spline interpolation.
\newblock {\em J. Approx. Theory\/}, {\bfseries 89}, 172--194.

\bibitem[Soubeyrand {\em et~al.}(2008)Soubeyrand, Enjalbert, and
  Sache]{SoubeyrandEnjalbertETAL2008}
Soubeyrand, S., Enjalbert, J., and Sache, I. (2008).
\newblock Accounting for roughness of circular processes: {U}sing {G}aussian
  random processes to model the anisotropic spread of airborne plant disease.
\newblock {\em Theor. Popul. Biol.}, {\bfseries 73}, 92--103.

\bibitem[Sun(2005)Sun]{Sun2005}
Sun, X. (2005).
\newblock Strictly positive definite functions on the unit circle.
\newblock {\em Math. Comp.}, {\bfseries 74}, 709--721.

\bibitem[Tovchigrechko and Vakser(2001)Tovchigrechko and
  Vakser]{TovchigrechkoVakser2001}
Tovchigrechko, A. and Vakser, I.~A. (2001).
\newblock How common is the funnel-like energy landscape in protein-protein
  interactions?
\newblock {\em Protein Science\/}, {\bfseries 10}, 1572--1583.

\bibitem[Werner(2002)Werner]{Werner2002}
Werner, D. (2002).
\newblock {\em Funktionalanalysis\/}.
\newblock Springer, Berlin, 3rd edition.

\bibitem[Wood(1995)Wood]{Wood1995}
Wood, A. T.~A. (1995).
\newblock When is a truncated covariance function on the line a covariance
  function on the circle?
\newblock {\em Stat. Probabil. Lett.}, {\bfseries 24}, 157--164.

\bibitem[Xu(2005)Xu]{Xu2005}
Xu, Y. (2005).
\newblock Lecture notes on orthogonal polynomials of several variables.
\newblock In {\em Inzell Lectures on Orthogonal Polynomials\/}, Advances in the
  Theory of Special Functions and Orthogonal Polynomials, pages 141--196. Nova
  Science Publishers, New York.

\bibitem[Xu and Cheney(1992)Xu and Cheney]{XuCheney1992}
Xu, Y. and Cheney, W. (1992).
\newblock Strictly positive definite functions on spheres.
\newblock {\em Proc. Amer. Mat. Soc.}, {\bfseries 116}, 977--981.

\bibitem[Ziegel(2012)Ziegel]{Ziegel2011}
Ziegel, J. (2012).
\newblock Stereological modelling of random particles.
\newblock {\em Comm. Statist. Theory Methods\/}.
\newblock To appear.

\end{thebibliography}

\end{document}